\definecolor{fond}{rgb}{0.05,0.05,0.25}
\DeclareMathOperator{\Err}{Err}
\DeclareMathSymbol{\eset}{\mathalpha}{AMSb}{"3F}
\numberwithin{equation}{section}
\theoremstyle{definition}
\newtheorem{thm}{Theorem}[section]
\newtheorem{lemma}[thm]{Lemma}
\newtheorem{corollary}[thm]{Corollary}
\newtheorem{prop}[thm]{Proposition}
\newtheorem{remark}[thm]{Remark}
\newcommand{\bbE}{{\ensuremath{\mathbb E}} }
\newcommand{\E}{{\ensuremath{\mathbb E}} } 
\renewcommand{\P}{{\ensuremath{\mathbb P}} } 
\newcommand{\bbP}{{\ensuremath{\mathbb P}} }
\newcommand{\bbR}{{\ensuremath{\mathbb R}} }
\newcommand{\bbZ}{{\ensuremath{\mathbb Z}} }
\newcommand{\ag}{\left\{ } 
\newcommand{\ad}{\right\} }
\newcommand{\cg}{\left[}
\newcommand{\cd}{\right]}
\newcommand{\pg}{\left(} 
\newcommand{\pd}{\right)}
\DeclareMathOperator{\dist}{dist}
\DeclareMathOperator{\tmed}{\textit{t}_{med}}
\DeclareMathOperator{\Tr}{Tr}
\DeclareMathOperator{\trel}{\textit{t}_{rel}}
\DeclareMathOperator{\Diam}{\text{Diam}}
\DeclareMathOperator{\Irr}{\text{Irr}}
\DeclareMathOperator{\MaxIrr}{\text{MaxIrr}}
\numberwithin{equation}{section}
\pgfplotsset{compat=1.18}
\begin{document}

\title{Stationary hitting times on vertex-transitive graphs}
\date{}

\author[1]{Nathanaël Berestycki}
\author[2]{Jonathan Hermon}
\author[3]{Lucas Teyssier}
\affil[1]{Universität Wien, \texttt{nathanael.berestycki@univie.ac.at}}
\affil[2]{University of British Columbia, \texttt{jhermon@math.ubc.ca}}
\affil[3]{Université de Lorraine, \texttt{lucas.teyssier@univ-lorraine.fr}}

\maketitle








\begin{abstract}
We prove a refined version of the Aldous and Brown's exponential approximation of stationary hitting times. These are valid for all reversible Markov chains. 
We then specialise our estimates for vertex-transitive graphs, where we obtain improved bounds which depend on the growth of the graphs. The most delicate cases are when the diameter is comparable to that of low-dimensional tori. In particular, in “dimensions” less than four (up to logarithmic factors) our error terms are the square of those of Aldous and Brown. These improved bounds play a crucial role in the companion work \cite{BerestyckiHermonTeyssier2022CTU} characterising the fluctuations of the cover time on vertex-transitive graphs.
\end{abstract}

\maketitle



\section{Introduction}\label{S:intro}
\subsection{Main results}

Let $X = (X_t)_{t\geq 0}$ be an irreducible reversible (rate 1) continuous-time Markov chain on a finite state space $S$. Denote the hitting time of a vertex $x\in S$ by $T_x := \min \ag t\geq 0 \mid X_t = x \ad$. The \textbf{relaxation time} of the chain is defined by $\trel := 1/(1-\theta_2)$, where $1 = \theta_1>\theta_2> \ldots$ are the eigenvalues of the transition matrix $P$ of the chain.
Let $\eset \ne B\subsetneq S$ such that the restriction of the chain to $B$ is irreducible, i.e.\ such that $\mathbb{P}_{b_1}[T_{b_2}<T_A]>0$ for all $b_1, b_2\in B$, where $T_A := \max_{x\in A} T_x$ is the hitting time of the set $A:= B^c$. We will say that such a set $B$ is $X$-\textbf{irreducible}.
It is known that there exists a unique probability measure $\alpha_B$ supported on $B$, called the \textbf{quasi-stationary distribution} on $B$, 
such that $T_A$ is an exponential variable for the chain conditioned on starting at $\alpha_B$: $T_A$ satisfies, for $t\geq 0$,
\begin{equation}
\label{e:QS tail}
    \bbP_{\alpha_B}(T_A>t) = \exp\pg - \frac{t}{\bbE_{\alpha_B}\cg T_A \cd}\pd \, .
\end{equation}
Quantifying how close quasi-stationary distributions are from the stationary distribution $\pi$ is fundamental for approximating hitting times $T_A$ for the walk started at $\pi$ by exponential variables. Such exponential approximations go back to the seminal work of Aldous and Brown \cite{AldousBrown1992}, who obtained quantitative error bounds which we now recall.

\begin{thm}[{\cite[Theorem 3, Equation (1), and Corollary 4]{AldousBrown1992}}]
\label{T:ABintro}
Let $(X_t)_{t\geq 0}$ be an irreducible reversible continuous-time Markov chain on a finite state space $S$, and denote its stationary distribution by $\pi$. Let $\eset \ne B\subsetneq S$ be such that the restriction of the chain to $B$ is irreducible, and set $A:= B^c$. We have that
\begin{equation}\label{e:errortermABintroproba}
     \pi(A)\leq 1 - \frac{\bbP_\pi(T_A>t)}{\bbP_{\alpha_B}(T_A>t)} \leq \frac{\trel}{\bbE_{\alpha_B} \cg T_A \cd}
\end{equation}
for all $t\geq 0$, and
\begin{equation}
\label{e:errortermABintroexpectations}
    \pi(A) \leq 1-\frac{\bbE_\pi \cg T_A \cd}{\bbE_{\alpha_B}\cg T_A \cd} \leq \frac{\trel}{\bbE_{\alpha_B} \cg T_A \cd } \, .
\end{equation}
\end{thm}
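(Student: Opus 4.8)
The statement is the Aldous–Brown exponential approximation with its two-sided error bounds, so the strategy is to reprove it by the standard spectral/coupling route rather than cite it. The key object is the survival function $u(t) := \bbP_\pi(T_A > t)$ and its quasi-stationary counterpart $\bar u(t) := \bbP_{\alpha_B}(T_A > t) = e^{-t/\bbE_{\alpha_B}[T_A]}$. The plan is to (i) establish the \emph{lower} bounds in both \eqref{e:errortermABintroproba} and \eqref{e:errortermABintroexpectations} by a short monotonicity/Markov argument; (ii) establish the \emph{upper} bounds by comparing $u$ with $\bar u$ through the sub-Markovian semigroup on $B$ and its spectral gap.

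\medskip

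\noindent\textbf{Step 1: the lower bounds.}
Write $\pi_B$ for $\pi$ conditioned on $B$, i.e.\ $\pi_B(\cdot) = \pi(\cdot)/\pi(B)$. Starting from $\pi$, with probability $\pi(A)$ the chain is already in $A$, so $T_A = 0$; otherwise it starts from $\pi_B$. Hence
\[
  \bbP_\pi(T_A > t) = \pi(B)\,\bbP_{\pi_B}(T_A > t), \qquad
  \bbE_\pi[T_A] = \pi(B)\,\bbE_{\pi_B}[T_A].
\]
Because $\alpha_B$ is the quasi-stationary distribution, it is the Perron eigenvector of the sub-Markov generator on $B$, and one checks (this is where the argument really uses the structure) that $\alpha_B$ stochastically dominates $\pi_B$ for the hitting time $T_A$ in the strong sense $\bbP_{\pi_B}(T_A > t) \le \bbP_{\alpha_B}(T_A > t)$ for all $t$; equivalently $\bbE_{\pi_B}[T_A] \le \bbE_{\alpha_B}[T_A]$. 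Combining with the display gives
\[
  \frac{\bbP_\pi(T_A > t)}{\bbP_{\alpha_B}(T_A > t)} \le \pi(B) = 1 - \pi(A),
  \qquad
  \frac{\bbE_\pi[T_A]}{\bbE_{\alpha_B}[T_A]} \le \pi(B) = 1 - \pi(A),
\]
which are exactly the left-hand inequalities. The cleanest way to get the stochastic domination is to expand $\pi_B$ in the eigenbasis of the sub-Markovian semigroup $(Q_t)$ on $B$ (all eigenvalues of $-$generator are $\ge \lambda_1 := 1/\bbE_{\alpha_B}[T_A]$ with $\alpha_B$ the top eigenvector with nonnegative entries), so that $\bbP_{\pi_B}(T_A>t)$ is a mixture $\sum_j c_j e^{-\lambda_j t}$ with $c_1 > 0$ dominating; the ratio to $e^{-\lambda_1 t}$ is then decreasing in $t$ and equals its $t\to\infty$ limit times a correction — but for the lower bound one only needs the bound at $t=0$, namely $\bbP_{\pi_B}(T_A>0)=1$ against $\bbP_{\alpha_B}(T_A>0)=1$, together with the fact that $\bbP_{\pi_B}(T_A>t)/\bbP_{\alpha_B}(T_A>t)$ is non-increasing.

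\medskip

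\noindent\textbf{Step 2: the upper bounds.}
For the right-hand inequalities, reverse the comparison and control the deficit. Using reversibility, decompose $h := d\pi_B/d\alpha_B$ (or rather the appropriate density in $L^2(\alpha_B)$) against the eigenfunctions $\varphi_1 = 1, \varphi_2, \dots$ of $Q_t$ with eigenvalues $e^{-\lambda_1 t} \ge e^{-\lambda_2 t} \ge \cdots$. Then
\[
  \bbP_{\pi_B}(T_A > t) = \sum_{j\ge 1} a_j\, e^{-\lambda_j t},
  \qquad \sum_j a_j = 1,\ a_1 = \langle h, \varphi_1\rangle_{\alpha_B} > 0,
\]
so that
\[
  1 - \frac{\bbP_{\pi_B}(T_A > t)}{e^{-\lambda_1 t}}
  = (1-a_1) - \sum_{j\ge 2} a_j\, e^{-(\lambda_j - \lambda_1)t}
  \le 1 - a_1.
\]
The content of the bound is then $1 - a_1 \le (\lambda_2-\lambda_1)^{-1}\lambda_1 \cdot \pi(B)^{-1}$ combined with $\pi(A) \ge$ the left side already proved, which after the substitution $\lambda_1 = 1/\bbE_{\alpha_B}[T_A]$ and $\lambda_2 - \lambda_1 \ge 1/\trel$ (the sub-Markovian gap dominates the full-chain gap, by interlacing/domain monotonicity of Dirichlet forms) yields
\[
  1 - \frac{\bbP_\pi(T_A>t)}{\bbP_{\alpha_B}(T_A>t)}
  \le \pi(A) + \pi(B)\Big(1 - \frac{\bbP_{\pi_B}(T_A>t)}{e^{-\lambda_1 t}}\Big)
  \le \pi(A) + \pi(B)(1-a_1)
  \le \frac{\trel}{\bbE_{\alpha_B}[T_A]},
\]
and integrating the same inequality against $dt$ (or applying it to $\bbE_\pi[T_A] = \int_0^\infty \bbP_\pi(T_A>t)\,dt$) gives \eqref{e:errortermABintroexpectations}. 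The estimate $1 - a_1 = \alpha_B(B) - \langle h,\varphi_1\rangle = \mathrm{Var}$-type quantity controlled by the gap is the standard Poincaré step; identifying $\langle h,\varphi_1\rangle_{\alpha_B}$ with $\pi(B)\,\lambda_1\,\bbE_{\alpha_B}[T_A]\,(\text{something})$ and tracking constants is the bookkeeping part.

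\medskip

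\noindent\textbf{Main obstacle.} The delicate point is the comparison $\lambda_2 - \lambda_1 \ge 1/\trel$ between the spectral gap of the \emph{sub-Markovian} (killed) chain on $B$ and the relaxation time of the \emph{full} chain on $S$, together with getting the constant in the Poincaré inequality exactly right so that the final bound has no spurious factor. This requires a careful Dirichlet-form/variational comparison: restricting the Dirichlet form to functions supported on $B$ can only increase the gap, but one must phrase $\lambda_1$ and $\lambda_2$ via Rayleigh quotients relative to $\alpha_B$ (not $\pi$) and reconcile the two inner products. I expect everything else — the mixture representation, the monotonicity at $t=0$, the passage from the tail bound to the expectation bound — to be routine once this comparison is in place.
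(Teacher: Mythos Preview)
Your Step~1 is essentially right once you phrase it in the correct inner product: working in $L^2(\pi)$ (not $L^2(\alpha_B)$), the killed semigroup $e^{tQ_B}$ is self-adjoint, $1_B=\sum_i c_i f_i$ gives $\bbP_\pi(T_A>t)=\sum_i c_i^2 e^{-\lambda_i t}$ with all $c_i^2\ge 0$ and $\sum_i c_i^2=\pi(B)$, and this immediately yields $\bbP_\pi(T_A>t)\le \pi(B)e^{-\lambda_1 t}=(1-\pi(A))\bbP_{\alpha_B}(T_A>t)$. Your $L^2(\alpha_B)$ setup with $\varphi_1=1$ is inconsistent: it would give $a_1=\langle h,1\rangle_{\alpha_B}=1$, collapsing the argument.

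Step~2 has a genuine gap. You reduce the upper bound to the two claims
\[
1-a_1 \le \frac{\lambda_1}{\pi(B)(\lambda_2-\lambda_1)}
\quad\text{and}\quad
\lambda_2-\lambda_1 \ge \frac{1}{\trel},
\]
and you identify the second as the ``main obstacle''. But the second inequality is \emph{false} in general. Take the complete graph $K_n$ and $B=\{x,y\}$ two vertices: the killed matrix $P_B$ has eigenvalues $\pm 1/(n-1)$, so $\lambda_2-\lambda_1=2/(n-1)$, whereas $1/\trel=n/(n-1)$; for $n\ge 3$ the claimed comparison fails. Interlacing gives $\lambda_2\ge 1/\trel$, not $\lambda_2-\lambda_1\ge 1/\trel$, and the Doob-transformed chain's gap $\lambda_2-\lambda_1$ has no reason to dominate the full chain's gap. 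Even if both claims held, your chain of inequalities lands at $\pi(A)+\lambda_1\trel$, not $\lambda_1\trel$, so the constant would be off.

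The paper bypasses $\lambda_2$ entirely. It identifies $c_1^2=1/\|\alpha_B/\pi\|_2^2=:1-R_M$ (so the mixture bound reads $(1-R_M)\le \bbP_\pi(T_A>t)/\bbP_{\alpha_B}(T_A>t)\le 1-\pi(A)$), and then bounds $R_M$ by a \emph{single-function} Rayleigh quotient: with $f_1=(\alpha_B/\pi)/\|\alpha_B/\pi\|_2$, one has $\|f_1\|_2=1$, $\mathrm{Var}_\pi f_1=R_M$, and since $(I-P)f_1=\lambda_1 f_1$ on $B$ (and $f_1$ vanishes on $A$),
\[
\lambda_1=\frac{\langle (I-P)f_1,f_1\rangle_\pi}{\|f_1\|_2^2}
=\mathrm{Var}_\pi f_1\cdot \frac{\langle (I-P)f_1,f_1\rangle_\pi}{\mathrm{Var}_\pi f_1}
\ge \frac{R_M}{\trel},
\]
using the extremal characterisation of $\trel$ for the \emph{full} chain. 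This gives $R_M\le \trel/\bbE_{\alpha_B}[T_A]$ in one line, with the exact constant, and never needs the killed gap $\lambda_2-\lambda_1$.
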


Our main results are improvements of the “error term” in the right hand side of \eqref{e:errortermABintroproba} and \eqref{e:errortermABintroexpectations}. Theorem \ref{thm: general bound bound with relaxation time intro} proves an improved bound for reversible Markov chains. Theorems \ref{thm: bounds QS for vertex transitive graphs intro with integrals and volumes} and \ref{thm:main improvement AB transitive} prove concrete bounds for transitive graphs depending on their volume growth. At the same time we remove the condition that the restriction of $P$ to $B$ is irreducible from all statements.

\medskip

The results below were motivated by the study of cover times for vertex-transitive graphs in the companion paper \cite{BerestyckiHermonTeyssier2022CTU}. There, Aldous--Brown types of approximations are useful to relate the hitting probabilities of small sets to their \textit{capacities}. The improvement that we prove in Theorem \ref{thm:main improvement AB transitive} is necessary to obtain the characterisations in \cite[Theorems 1.1 and 1.2]{BerestyckiHermonTeyssier2022CTU}.

\medskip

There has been much interest in recent years in understanding the quasi-stationary distribution, and the rate of convergence to it for the chain conditioned on not hitting the corresponding set, see \cite{DiaconisMiclo2009,DiaconisMiclo2015,DiaconisHouston-EdwardsKelseySaloff-Coste2021}. The results below should be useful in that context too.

\medskip

Our first improvement is the following.
\begin{thm}[A refinement of the AB Theorem]
\label{thm: general bound bound with relaxation time intro}
Let $(X_t)_{t\geq 0}$ be an irreducible reversible continuous-time Markov chain on a finite state space $S$, and denote its  stationary distribution by $\pi$. Let $\eset \ne A\subsetneq S$. Let $M$ be an $X$-irreducible component of $A^c$ for which $\bbE_{\alpha_M}[T_A]$ is maximal. Then
\begin{equation}
\label{e:ABrefinement1trelintro}
    \pi(A) \le 1 - \frac{\bbP_\pi(T_A>t)}{\bbP_{\alpha_M}(T_A>t)}\le \pi(A) + 2 \sum_{x\in S} \pi(x) \left(   \mathbb{P}_x[T_A \le 2\trel ]  \right)^2 \, ,
\end{equation}
for every $t\geq 0$, and
\begin{equation}
\label{e:ABrefinement4trelintro}
\pi(A)\le 1-\frac{\bbE_\pi \cg T_A \cd}{\bbE_{\alpha_M}\cg T_A \cd}   \le \pi(A) + 2 \sum_{x\in S} \pi(x) \left(   \mathbb{P}_x[T_A \le 2\trel ]  \right)^2 \, .
\end{equation}
\end{thm}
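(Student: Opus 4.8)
The strategy is to compare the chain started from $\pi$ with the chain started from the quasi-stationary distribution $\alpha_M$ by controlling the density $d\pi/d\alpha_M$ restricted to $M$, or equivalently by estimating how much probability mass $\pi$ places on $A$ versus how much it "should" place after a short burn-in. I would begin with the standard decomposition: write $\pi = \pi(A)\,\pi_A + \pi(M^c\cap A^c)\,(\cdots) + \pi(M)\,\pi_M$, but since $M$ is the component with maximal $\bbE_{\alpha_M}[T_A]$, the key point is to reduce everything to a comparison on $M$. Following Aldous--Brown, the ratio $\bbP_\pi(T_A>t)/\bbP_{\alpha_M}(T_A>t)$ is governed by the "overlap" between $\pi$ conditioned on $\{T_A>s\}$ for moderate $s$ and $\alpha_M$; the quasi-stationarity identity \eqref{e:QS tail} lets us replace the $\alpha_M$-chain's tail by a clean exponential. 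The lower bound $\pi(A)\le 1-(\cdots)$ is immediate from $\bbP_\pi(T_A>t)\le \bbP_\pi(X_0\in M^c)\le \pi(A^c\setminus M)+\pi(M)$ combined with the definition of $\alpha_M$ as the maximizer; the real content is the upper bound.

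For the upper bound, I would run the chain from $\pi$ for a time of order $\trel$ and use reversibility to write, for the distribution $\mu_s$ of $X_s$ conditioned on $\{T_A>s\}$ started from $\pi$,
\begin{equation}
\bbP_\pi(T_A>t+s)=\bbE_{\mu_s}[\bbP(T_A>t)]\cdot\bbP_\pi(T_A>s),
\end{equation}
and similarly expand $\bbP_{\alpha_M}(T_A>t)=e^{-t/\bbE_{\alpha_M}[T_A]}$. The plan is then to bound the total variation (or $L^2$) distance between $\mu_s$ and $\alpha_M$ using the spectral gap: after time $s\asymp \trel$ the sub-Markovian semigroup $P_t^M$ (the chain killed on hitting $A$) contracts the orthogonal complement of the Perron eigenfunction by a factor $e^{-s/\trel}$ up to the relevant sub-dominant eigenvalue, so the discrepancy is controlled by $\sum_x\pi(x)\bbP_x[T_A\le s]$ — the mass that "leaks into $A$" within time $s\asymp 2\trel$. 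Squaring enters via a Cauchy--Schwarz / $L^2\to L^1$ step: the first-order correction $\sum_x\pi(x)\bbP_x[T_A\le 2\trel]$ itself is cancelled by the $\pi(A)$ term on the right (since $\bbP_x[T_A\le 2\trel]\ge \mathbf 1_{x\in A}$), leaving only the second-order remainder $\sum_x\pi(x)(\bbP_x[T_A\le 2\trel])^2$ with the constant $2$.

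The main obstacle I anticipate is making the $L^2$ contraction argument for the \emph{killed} chain fully rigorous without assuming $M$ is the unique $X$-irreducible component of $A^c$: the sub-Markovian generator restricted to $A^c$ need not be irreducible, so its Perron--Frobenius theory has to be applied component-by-component, and one must verify that choosing $M$ to maximize $\bbE_{\alpha_M}[T_A]$ indeed dominates the contributions of the other components (their decay rates $1/\bbE_{\alpha_{M'}}[T_A]$ are all at least $1/\bbE_{\alpha_M}[T_A]$, so they decay faster and contribute to the error, not the main term). A secondary technical point is handling continuous time cleanly: the bound $\bbP_x[T_A\le 2\trel]$ rather than a discrete analogue suggests using the relation $\trel=\int_0^\infty \widehat{\mathrm{something}}$, or more simply estimating $1-e^{-s/\trel}$ and choosing $s=2\trel$ so that the exponential factors combine favorably; I would expect the factor $2$ in front of the sum and the threshold $2\trel$ to come out of optimizing $e^{-s/\trel}$ against $(1-e^{-s/\trel})$-type terms, using $1-e^{-2}<1$ and $2e^{-2}<1$ type inequalities. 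Once the killed-chain contraction estimate is in place, both \eqref{e:ABrefinement1trelintro} and \eqref{e:ABrefinement4trelintro} should follow, the latter by integrating the former in $t$ over $[0,\infty)$ and using $\bbE_\pi[T_A]=\int_0^\infty\bbP_\pi(T_A>t)\,dt$ together with \eqref{e:QS tail}.
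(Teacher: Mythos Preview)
Your plan has the right spirit (burn-in for a time of order $\trel$, then compare with the quasi-stationary law), but two concrete steps do not work as written, and the paper's actual mechanism is different enough that your outline would need substantial new ideas to close.

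First, the ``easy'' direction. You claim $\pi(A)\le 1-\bbP_\pi(T_A>t)/\bbP_{\alpha_M}(T_A>t)$ follows from $\bbP_\pi(T_A>t)\le \pi(A^c)$. But the inequality to prove is $\bbP_\pi(T_A>t)\le \pi(A^c)\,\bbP_{\alpha_M}(T_A>t)$, which is strictly stronger: you must show the $\pi$-started tail is dominated by $\pi(A^c)$ times the \emph{quasi-stationary} tail, not by $\pi(A^c)$ alone. In the paper this is obtained from the spectral expansion $\bbP_\pi(T_A>t)=\sum_i c_i^2 e^{-\lambda_i t}$ together with $\lambda_i\ge \lambda_1$ and $\sum_i c_i^2=\pi(A^c)$; your ``$X_0\in M^c$'' bound does not see the factor $\bbP_{\alpha_M}(T_A>t)$.

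Second, and more seriously, the upper bound. Your sketch says the first-order term $\sum_x\pi(x)\bbP_x[T_A\le 2\trel]$ is ``cancelled by the $\pi(A)$ term, leaving only the second-order remainder'' via Cauchy--Schwarz. That cancellation does not happen: $\sum_x\pi(x)\bbP_x[T_A\le 2\trel]$ is typically of order $\trel/\bbE_{\alpha_M}[T_A]$, not $\pi(A)$, and no generic Cauchy--Schwarz step turns a first moment into a second moment with constant $2$. The paper instead proves the exact $L^2$ identity
\[
\sum_{x\in B}\pi(x)\bigl(\bbP_x[T_A\le t]\bigr)^2=\sum_{i=1}^m c_i^2\bigl(1-e^{-\lambda_i t}\bigr)^2,
\]
computes $c_1^2=1-R_M$ and $\sum_{i\ge 2}c_i^2=R_M-\pi(A)$, and then \emph{defines} an auxiliary time $\tmed$ by $\sum_{i\ge 2}c_i^2(1-e^{-\lambda_i\tmed})^2=\tfrac12\sum_{i\ge 2}c_i^2$, so that $R_M-\pi(A)=2\sum_{x}\pi(x)(\bbP_x[T_A\le\tmed])^2-2(1-R_M)(1-e^{-\lambda_1\tmed})^2$. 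The constant $2$ and the squared probability arise from this defining equation, not from a contraction estimate. Finally $\tmed\le 2\trel$ is proved via the interlacing bound $\lambda_2\ge 1/\trel$ (collapsing $A$ to a single state). Your outline contains none of these ingredients, and without the $L^2$ identity and the $\tmed$ device I do not see how your burn-in argument produces the squared hitting probability with a clean constant.
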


\begin{remark}\label{rem: conditions unique maximal eigenvalue}
    Let $X=(X_t)_{t\geq 0}$, $A$ and $M$ as in Theorem \ref{thm: general bound bound with relaxation time intro}.
    Denote the set of $X$-irreducible components of a subset $W\subsetneq S$ by $\Irr_X(W)$.
    We will see as a consequence of the Perron--Frobenius theorem and the interlacing eigenvalue theorem that we can have $\bbE_{\alpha_I}[T_A]>\trel$ for at most one $I\in \Irr_X(A^c)$. If 
    \begin{equation}\label{eq:condition max E alpha I trel}
        \max_{I\in \Irr_X(A^c)} \bbE_{\alpha_I}[T_A]>\trel \, ,
    \end{equation} then $M$ is \textit{the unique} such maximiser.

In most interesting cases (for example for finite sets in large tori), the condition \eqref{eq:condition max E alpha I trel} is automatically satisfied, and when it fails to hold the upper bounds in Theorems \ref{T:ABintro} and \ref{thm: general bound bound with relaxation time intro} are trivial.
\end{remark}

    For small sets $A$, applying the trivial bound $\mathbb{P}_x[T_A \le 2 \trel]^2 \le \mathbb{P}_x[T_A \le 2 \trel]$ to the right hand side of \eqref{e:ABrefinement1trelintro} and \eqref{e:ABrefinement4trelintro} recovers Theorem \ref{T:ABintro} up to a universal constant. However in general this bound is very wasteful, and it is often possible to estimate directly the error term in Theorem \ref{thm: general bound bound with relaxation time intro} (i.e., the sum in the right hand side of \eqref{e:ABrefinement1trelintro} and \eqref{e:ABrefinement4trelintro}).    
For vertex-transitive graphs, it can be bounded in terms of the growth of the graph as follows.
\begin{thm}\label{thm: bounds QS for vertex transitive graphs intro with integrals and volumes}
    There exists a universal constant $C_0$ such that the following holds. Let $\Gamma = (V,E)$ be a finite connected vertex-transitive graph, with degree $\deg(\Gamma)\geq 1$. Let $\eset \ne A \subsetneq V$.
    For the simple random walk on $\Gamma$, we have  
   \begin{equation}
   \begin{split}
       2\sum_{x\in S} \pi(x) \left(   \mathbb{P}_x[T_A \le 2\trel ]  \right)^2 & \leq C_0|A|^2 \pg \frac{\trel}{\bbE_\pi[T_o]} \pd^2 \pg 1 + \frac{\deg(\Gamma)^2 n}{\trel^2}\int_{0}^{\sqrt{\trel/\deg(\Gamma)}}\frac{r^{3} \mathrm{d}r}{V(r)}  \pd \\
       & =: \Err(\Gamma, A) \, ,
   \end{split}
\end{equation}
where $V(r)$ denotes the volume of a ball of radius $\lfloor r \rfloor$.
\end{thm}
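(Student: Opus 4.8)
The plan is to reduce to the case $A=\{o\}$ for a single vertex, replace the event $\{T_o\le 2\trel\}$ by a ratio of occupation times, and then bound everything by the spectral sum $\sum_{j\ge 2}(1-\theta_j)^{-2}$, which is finally estimated by the volume growth through an on‑diagonal heat‑kernel bound. Write $n:=|V|$, $d:=\deg(\Gamma)$, let $\pi\equiv 1/n$ be the (uniform) stationary law of the continuous‑time rate‑$1$ walk, $p_t(x,y):=\mathbb P_x[X_t=y]$, and $\lambda_j:=1-\theta_j$, so $\lambda_2=1/\trel$ and, by the standard spectral formula for hitting times on a vertex‑transitive graph, $\mathbb E_\pi[T_o]=\sum_{j\ge 2}\lambda_j^{-1}\ (\ge\trel)$; equivalently $n\int_0^\infty(p_s(o,o)-\tfrac1n)\,\mathrm ds=\int_0^\infty(\Tr P_s-1)\,\mathrm ds=\mathbb E_\pi[T_o]$, using $\Tr P_s=n\,p_s(o,o)$. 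Set $\ell_t^o:=\int_0^t\mathbf 1_{\{X_s=o\}}\,\mathrm ds$. The first move is the reduction: a union bound and Cauchy--Schwarz give $\mathbb P_x[T_A\le 2\trel]^2\le |A|\sum_{a\in A}\mathbb P_x[T_a\le 2\trel]^2$, and since $\pi$ is uniform and $\Gamma$ vertex‑transitive the quantity $\sum_x\pi(x)\mathbb P_x[T_a\le 2\trel]^2$ does not depend on $a$, so the left‑hand side of the theorem is at most $2|A|^2\cdot\frac1n\sum_{x}\mathbb P_x[T_o\le 2\trel]^2$.

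Next, for any $t\ge 0$, on $\{T_o\le t\}$ the interval $[T_o,T_o+t]$ lies in $[0,2t]$, so the strong Markov property at $T_o$ yields $\mathbb E_x[\ell_{2t}^o]\ge \mathbb P_x[T_o\le t]\,\mathbb E_o[\ell_t^o]$, i.e.\ $\mathbb P_x[T_o\le t]\le \mathbb E_x[\ell_{2t}^o]/\mathbb E_o[\ell_t^o]$; I apply this with $t=2\trel$. For the denominator, since every mode in $p_s(o,o)-\tfrac1n=\tfrac1n\sum_{j\ge 2}f_j(o)^2 e^{-\lambda_j s}$ decays at rate $\ge\lambda_2=1/\trel$, one gets $\int_0^{2\trel}(p_s(o,o)-\tfrac1n)\,\mathrm ds\ge(1-e^{-2})\int_0^\infty(p_s(o,o)-\tfrac1n)\,\mathrm ds$, hence $\mathbb E_o[\ell_{2\trel}^o]\ge(1-e^{-2})\mathbb E_\pi[T_o]/n$. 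For the numerator, reversibility and uniformity give $p_s(x,o)=p_s(o,x)$, so Chapman--Kolmogorov yields $\sum_x p_u(x,o)p_v(x,o)=p_{u+v}(o,o)$ and therefore
\[
\sum_{x}\mathbb E_x[\ell_s^o]^2=\int_0^s\!\!\int_0^s p_{u+v}(o,o)\,\mathrm du\,\mathrm dv\le\int_0^{2s}w\,p_w(o,o)\,\mathrm dw=\frac{2s^2}{n}+\frac1n\sum_{j\ge 2}\lambda_j^{-2},
\]
the last equality from $n\int_0^\infty w(p_w(o,o)-\tfrac1n)\,\mathrm dw=\int_0^\infty w(\Tr P_w-1)\,\mathrm dw=\sum_{j\ge 2}\lambda_j^{-2}$. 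Taking $s=4\trel$ and combining the last three facts,
\[
\frac1n\sum_{x}\mathbb P_x[T_o\le 2\trel]^2\ \le\ \frac1{(1-e^{-2})^2}\cdot\frac{32\,\trel^2+\sum_{j\ge 2}\lambda_j^{-2}}{\mathbb E_\pi[T_o]^2},
\]
so the theorem reduces to proving $\sum_{j\ge 2}\lambda_j^{-2}\le C\bigl(\trel^2+d^2n\int_0^{\sqrt{\trel/d}}r^3V(\lfloor r\rfloor)^{-1}\,\mathrm dr\bigr)$ for a universal $C$ (the $\trel^2$ being harmless since $32\trel^2/\mathbb E_\pi[T_o]^2=32(\trel/\mathbb E_\pi[T_o])^2$ is already of the allowed form).

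For that estimate, write $\sum_{j\ge 2}\lambda_j^{-2}=n\int_0^\infty w(p_w(o,o)-\tfrac1n)\,\mathrm dw$ and split at $w=\trel$. On $[\trel,\infty)$ exponential decay at rate $\ge1/\trel$ gives $n\int_{\trel}^\infty w(p_w(o,o)-\tfrac1n)\,\mathrm dw\le 2\trel^2\,n\,p_{\trel}(o,o)$. On $[0,\trel]$ I would invoke an on‑diagonal heat‑kernel bound valid on vertex‑transitive graphs of the form $p_w(o,o)\le C_1/V(\lfloor c_1\sqrt{w/d}\,\rfloor)$ with universal $C_1\ge 1$, $c_1\le 1$ (for small $w$ this reads $p_w(o,o)\le C_1$, which is fine); the change of variables $w=d\,r^2/c_1^2$ then gives $n\int_0^{\trel}w\,p_w(o,o)\,\mathrm dw\le \frac{2C_1 d^2n}{c_1^4}\int_0^{\sqrt{\trel/d}}r^3V(\lfloor r\rfloor)^{-1}\,\mathrm dr$. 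The same bound at $w=\trel$ gives $n\,p_{\trel}(o,o)\le C_1n/V(\lfloor c_1\sqrt{\trel/d}\,\rfloor)$, and since $V$ is non‑decreasing one has the elementary comparison $\int_{R/2}^{R}r^3V(\lfloor r\rfloor)^{-1}\,\mathrm dr\ge R^4/\bigl(16\,V(\lfloor R\rfloor)\bigr)$ with $R=\sqrt{\trel/d}$; combined with $\trel^2=d^2R^4$ this bounds the tail term $2\trel^2n\,p_{\trel}(o,o)$ by a constant times $d^2n\int_0^{\sqrt{\trel/d}}r^3V(\lfloor r\rfloor)^{-1}\,\mathrm dr$ as well. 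Assembling the head and tail bounds gives the required estimate, and substituting back into the two displays above completes the proof.

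The step I expect to be the main obstacle is the on‑diagonal heat‑kernel bound $p_w(o,o)\le C_1/V(\lfloor c_1\sqrt{w/d}\,\rfloor)$ with \emph{universal} constants, uniformly over all finite vertex‑transitive graphs: this is the one point where soft arguments do not suffice. It should be obtained from the Coulhon--Saloff‑Coste isoperimetric inequality for transitive graphs (the vertex boundary of any subset is controlled by the inverse of the volume‑growth function) combined with a Nash‑type iteration, with the degree $d$ entering through the normalisation of the Dirichlet form of the simple random walk. A secondary, purely technical point is to keep the bookkeeping with $\lfloor\cdot\rfloor$ and with the small‑$w$ regime clean throughout, and to check that the heat‑kernel bound is used only for $w\le\trel$, which is safely inside its range of validity.
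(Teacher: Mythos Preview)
Your proposal is correct and follows the same overall architecture as the paper: reduce to a singleton via Cauchy--Schwarz and transitivity, bound $\mathbb{P}_x[T_o\le\cdot]$ by a ratio of occupation times, sum the squares via Chapman--Kolmogorov to reach the spectral quantity $\sum_{j\ge 2}\lambda_j^{-2}$, and convert that sum into a volume integral. The implementations diverge at two points, and it is worth knowing how the paper handles them.

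First, where you use deterministic times and the strong-Markov inequality $\mathbb{P}_x[T_o\le t]\le\mathbb{E}_x[\ell^o_{2t}]/\mathbb{E}_o[\ell^o_t]$, the paper instead kills the chain at an independent exponential time $\tau$ of mean $t_2=2\trel$ (losing only a factor $e^2$) and uses the memoryless property to get the exact \emph{identity} $\mathbb{P}_x[T_o\le\tau]=\mathbb{E}_x[N]/\mathbb{E}_o[N]$ with $N=\ell^o_\tau$. This produces the Laplace-type integrals $I_j=\int_0^\infty s^j p_s(o,o)e^{-s/t_2}\,\mathrm ds$ in place of your truncated integrals, so that $\sum_x\pi(x)\mathbb{P}_x[T_o\le\tau]^2=I_1/(nI_0^2)$ exactly; this removes your split at $w=\trel$ and the separate treatment of the tail term $2\trel^2\,n\,p_{\trel}(o,o)$.

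Second, and this is the more substantive difference, for the bound on $\sum_{j\ge 2}\lambda_j^{-2}$ the paper does \emph{not} pass through an on-diagonal heat-kernel estimate. It invokes directly the spectral-measure bound of Lyons--Oveis~Gharan,
\[
\nu([0,r])\ \le\ \frac{4}{V\bigl(\sqrt{(2dr)^{-1}}\bigr)},
\]
and writes $\frac{1}{n}\sum_{j\ge 2}\lambda_j^{-2}=\int_{\beta_2}^\infty \frac{2}{r^3}\nu([0,r])\,\mathrm dr$; one change of variables then gives the volume integral with explicit universal constants and the correct degree dependence. This bypasses precisely the obstacle you flagged: no Coulhon--Saloff-Coste isoperimetry plus Nash iteration is needed. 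Your route via $p_w(o,o)\le C_1/V(\lfloor c_1\sqrt{w/d}\rfloor)$ would work as well (and is morally equivalent, since the Lyons--Oveis~Gharan bound is itself an encapsulation of such estimates), but the paper's route is shorter, yields explicit constants, and avoids assembling the heat-kernel bound from scratch. A minor cosmetic point: in your numerator display the final ``$=$'' should be ``$\le$'', since you are extending the integral from $[0,2s]$ to $[0,\infty)$.
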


This type of improvement is crucial to handle borderline cases, and is in fact significant for tori $(\bbZ/m\bbZ)^d$ for any $d\geq 2$. 
Indeed for tori (if $d$ is fixed and as $m\to \infty$) we have $\trel \asymp m^2$, and $V(r)\asymp r^d$. In particular for tori of dimensions 2 and 3, for sets $A$ of bounded size, we have
\begin{equation}
    \Err(\Gamma, A) \asymp \pg \frac{\trel}{\bbE_\pi T_0}\pd^2  \asymp \pg \frac{\trel}{\bbE_{\alpha_{M}} T_A}\pd^2,
\end{equation} 
which is the square of the error term of Aldous and Brown!
The following table illustrates the improvements of the error terms for tori.
\begin{table}[h]
\centering
\caption{Asymptotic parameters for tori $(\mathbb{Z}/m\mathbb{Z})^d$ as $m \to \infty$ for sets $A$ of size $k\geq 1$. The column 2 is $\Theta_d(\cdot)$, and the columns 3 and 4 are $\Theta_{d,k}(\cdot)$. In all cases we have $\trel =\Theta_d(m^2)$. We denote $n = |V| = m^d$. }
\begin{tabular}{cccccc}
\toprule
Dimension $d$  & Hitting time $\bbE_\pi T_0$ & Error term $\trel/\bbE_\pi T_A$ & $\Err(\Gamma, A)$ \\
\midrule
1            & $m^2$         & $1$           & $1$  \\
2             & $m^2\log m$   & $1/\log m$    & $(1/\log m)^2$  \\
3             & $n = m^3$         & $n^{-1+2/d} = 1/m$         & $(1/m)^2$  \\
4              & $n = m^4$         & $n^{-1+2/d} = 1/m^2$       & $\frac{\log n}{n} \asymp (\log m)/m^4$  \\
$\geq 5$       & $n= m^d$         & $n^{-1+2/d} = 1/m^{d-2}$   & $1/n = 1/m^d$   \\
\bottomrule
\end{tabular}
\end{table}

Finally, using the recent structural results of Tessera and Tointon \cite{TesseraTointonfinitary, TesseraTointonIsop}, we are able to further bound our error terms $\Err(\Gamma, A)$ for vertex-transitive graphs, purely in terms of the diameter of the graph. Let us set up some notation. Let $\Gamma = (V,E)$ be a finite connected vertex-transitive graph. Denote its number of vertices by $n = |V|$, its graph distance by $\dist_{\Gamma}$, and its diameter by $D = \Diam(\Gamma) := \max_{x,y\in V} \dist_{\Gamma}(x,y)$. Denote its degree by $\deg(\Gamma)$. We can write in a unique way $n = D^q R$, where $q$ is an integer and $1\leq R<D$. Consider the simple random walk on $\Gamma$. Denote its stationary distribution by $\pi$ and let $o\in V$ be any vertex. We set
\begin{equation}\label{e:def of beta Gamma}
    \beta(\Gamma) := \begin{cases}
\frac{D^4}{(\mathbb{E}_{\pi}[T_o])^{2}} & \text{ if } q \in \ag 1,2\ad \\
\frac{D^4}{(\mathbb{E}_{\pi}[T_o])^{2}}\pg 1 + \frac{R\log R}{D}\pd & \text{ if } q=3 \\
\frac{D^4}{(\mathbb{E}_{\pi}[T_o])^{2}}\pg R + \log (\frac{D}{R})\pd & \text{ if } q=4 \\
1/|V| & \text{ if } q \ge 5  \\
\end{cases}.
\end{equation}
We prove the following.
\begin{thm}[A specialised quasi-stationary approximation for transitive graphs]
\label{thm:main improvement AB transitive}
     There exists a universal constant $C_1>0$ such that the following holds. Let $d\geq 2$ be an integer. Let $\Gamma = (V,E)$ be a finite connected vertex-transitive graph of degree $d$. Let $\eset \ne A\subsetneq V$. Let $M$ be an $X$-irreducible component of $A^c$ for which $\bbE_{\alpha_M}[T_A]$ is maximal.\footnote{By Remark \ref{rem: conditions unique maximal eigenvalue}, $M$ is automatically guaranteed to be the unique maximiser if $\bbE_{\alpha_{M}}[T_A]>\trel$.}
   For the simple random walk on $\Gamma$, we have
\begin{equation}
\label{e:improvementABtransitivewithbetaproba}
   0 \leq \pi(M) - \frac{\bbP_\pi(T_A>t)}{\bbP_{\alpha_{M}}(T_A>t)} \leq C_1|A|^2 d^2\beta(\Gamma)
\end{equation}
for all $t\geq 0$, and 
\begin{equation}\label{e:improvementABtransitivewithbetaexpectations}
    0\leq \pi(M) - \frac{\bbE_\pi \cg T_A \cd}{\bbE_{\alpha_{M}} \cg T_A \cd} \leq C_1|A|^2d^2\beta(\Gamma) \, .
\end{equation}
\end{thm}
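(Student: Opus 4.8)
The plan is to derive \eqref{e:improvementABtransitivewithbetaproba}--\eqref{e:improvementABtransitivewithbetaexpectations} by combining the two general results already proved. Theorem~\ref{thm: general bound bound with relaxation time intro} bounds both $1-\bbP_\pi(T_A>t)/\bbP_{\alpha_M}(T_A>t)$ and $1-\bbE_\pi[T_A]/\bbE_{\alpha_M}[T_A]$ from above by $\pi(A)+2\sum_x\pi(x)\bbP_x[T_A\le2\trel]^2$ and from below by $\pi(A)$, while Theorem~\ref{thm: bounds QS for vertex transitive graphs intro with integrals and volumes} bounds the sum $2\sum_x\pi(x)\bbP_x[T_A\le2\trel]^2$ by $\Err(\Gamma,A)$. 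Since $M\subseteq A^c$ forces $\pi(M)+\pi(A)\le1$, the upper bounds in \eqref{e:improvementABtransitivewithbetaproba}--\eqref{e:improvementABtransitivewithbetaexpectations} reduce to the purely geometric estimate $\Err(\Gamma,A)\le C_1|A|^2d^2\beta(\Gamma)$, and the matching lower bounds follow from the left inequality of Theorem~\ref{thm: general bound bound with relaxation time intro} after accounting for the gap between $\pi(A)$ and $\pi(M)$, which is itself controlled by the error term since every $X$-irreducible component of $A^c$ other than $M$ has $\bbE_{\alpha_I}[T_A]\le\trel$ by Remark~\ref{rem: conditions unique maximal eigenvalue} and hence escapes within a bounded multiple of $\trel$. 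So the whole task becomes: bound $\Err(\Gamma,A)=C_0|A|^2(\trel/\bbE_\pi[T_o])^2\bigl(1+\tfrac{d^2 n}{\trel^2}\,I\bigr)$, with $I:=\int_0^{\sqrt{\trel/d}}r^3/V(r)\,\mathrm dr$, by $C_1|A|^2d^2\beta(\Gamma)$.

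Two geometric inputs about a finite connected vertex-transitive graph $\Gamma$ will do this. First, the standard Poincaré/canonical-paths bound along geodesics, which uses transitivity to give $\trel\le CD^2$ with $C$ universal; this lets me replace $\trel$ by $D^2$ throughout at the cost of harmless powers of the degree $d$, and it confines the integration in $I$ to $[0,C^{1/2}D]$. Combined with the identity $\bbE_\pi[T_o]=\sum_{j\ge2}(1-\theta_j)^{-1}\ge\tfrac12(n-1)$ (random target lemma plus transitivity), it already handles the ``$1$'' in $\Err(\Gamma,A)$, which contributes $\lesssim D^4/(\bbE_\pi[T_o])^2$. Second, the Tessera--Tointon structure theory \cite{TesseraTointonfinitary,TesseraTointonIsop}: writing $n=D^qR$ with $q$ a positive integer and $1\le R<D$, it yields matching two-sided estimates for $V(r)$ over $1\le r\le D$, of the schematic ``piecewise power'' shape $V(r)\asymp_q R\,r^q$ above a certain inner scale and $V(r)\gtrsim_q r^{q'}$ with $q'\ge q$ below it (the graph being macroscopically $q$-dimensional, and of no smaller exponent at smaller scales), together with a doubling estimate $V(2r)\le C_q V(r)$. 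Substituting these turns $I$ into an elementary integral of a piecewise power.

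The rest is a case analysis on $q$. For $q\le4$ the factor $\bbE_\pi[T_o]$ cancels between $\Err(\Gamma,A)$ and $\beta(\Gamma)$, so it suffices to show $\trel^2+d^2nI\lesssim d^2D^4$ when $q\in\{1,2\}$, and $\trel^2+d^2nI\lesssim d^2D^4\bigl(1+\tfrac{R\log R}{D}\bigr)$ respectively $d^2D^4\bigl(R+\log(D/R)\bigr)$ when $q=3$ respectively $q=4$; since $\trel^2\le C^2D^4$ and $d\ge2$ the $\trel^2$ term is harmless and everything comes down to bounding $nI=D^qR\cdot I$. For $q\ge5$ one uses instead $\bbE_\pi[T_o]\gtrsim n$ and $D\le n^{1/q}$. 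When $q\ge5$ the integrand $r^3/V(r)$ is more than summable (this is where $q'\ge q\ge5$ matters), so $I=O_q(1)$ and $\Err(\Gamma,A)\lesssim |A|^2d^2/n=|A|^2d^2\beta(\Gamma)$. When $q\in\{1,2\}$, $I$ is dominated by $\tfrac1R\int r^{3-q}\,\mathrm dr$ up to the small-scale contribution, and a short computation using $1\le R<D$ gives $nI\lesssim_q D^4$. The borderline dimensions $q=3$ and $q=4$ are exactly those for which the exponent $3-q$ equals $0$ or $-1$: the large-scale part of $I$ is then linear, respectively logarithmic, in the integration endpoint, while the small-scale part (where $V$ grows faster than $r^q$) contributes the extra $\log R$, respectively $O_q(1)$, term, and carrying $n=D^qR$ through yields $nI\lesssim_q D^4\bigl(1+\tfrac{R\log R}{D}\bigr)$ for $q=3$ and $nI\lesssim_q D^4\bigl(R+\log(D/R)\bigr)$ for $q=4$, matching \eqref{e:def of beta Gamma}. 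Finally, for the finitely many graphs whose diameter is too small for the structure theorem to apply, the crude bound $\bbP_x[T_A\le2\trel]^2\le\bbP_x[T_A\le2\trel]$ with Theorem~\ref{T:ABintro} already suffices.

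The step I expect to be the main obstacle is this last synthesis with the Tessera--Tointon results. These are stated as structure theorems---at scale $D$ the graph is close, with controlled error, to a nilpotent (torus-type) Cayley graph---rather than as a ready-made two-sided volume estimate, so one must extract clean and uniform volume and doubling bounds valid on all of $1\le r\le\sqrt{\trel/d}$, and in particular pin down the inner crossover scale(s) precisely enough that the near-critical integrals at $q=3$ and $q=4$ reproduce exactly the logarithmic corrections appearing in $\beta(\Gamma)$. By comparison, the reduction to $\Err(\Gamma,A)$ in the first step, the Poincaré bound $\trel\lesssim D^2$, and the cases $q\in\{1,2\}$ and $q\ge5$ are comparatively routine.
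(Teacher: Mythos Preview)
Your approach is essentially the paper's: combine Theorems~\ref{thm: general bound bound with relaxation time intro} and~\ref{thm: bounds QS for vertex transitive graphs intro with integrals and volumes} to reduce to bounding $\Err(\Gamma,A)$, replace $\trel$ by $dD^2$, and then estimate $\int_0^D r^3/V(r)\,\mathrm dr$ case-by-case in $q$ via Tessera--Tointon. The one place you make your life harder than necessary is your anticipated ``main obstacle'': \cite[Proposition~6.1]{TesseraTointonIsop} already gives the piecewise-polynomial volume lower bounds $V(s)\ge c(q)s^{q+1}$ for $s\le R$, $V(s)\ge c(q)Rs^q$ for $s>R$, and $V(s)\ge c(5)s^5$ for $q\ge5$, as a ready-made statement, so there is no extraction from structure theorems to perform; the paper also takes $\trel\le dD^2$ from \cite[Theorem~1.7]{LyonsOveisGharan2018} rather than from a canonical-paths argument.
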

\begin{remark}[Dependence on the degree]
    \label{rem:dependence on |A| and d}
 We believe the optimal dependence on $d$ in Theorem \ref{thm:main improvement AB transitive} can be improved to be linear.
 For edge-transitive graphs we believe that the dependence on $d$ can be entirely removed.
\end{remark}

\begin{remark}
    Our results also hold in discrete time, up to a few minor changes which are described in Remark \ref{rem: discrete and continuous}.
\end{remark}

Many arguments from Section \ref{s:preparation} are borrowed from \cite{AldousBrown1992}. In order to prove the refined bounds from Theorem \ref{thm: general bound bound with relaxation time intro}, there are two necessary new steps. The first one is to find an explicit formula for the first coefficient of the decomposition of the indicator function $\mathrm{1}_B$ in an appropriate orthonormal basis; this is presented in Lemma \ref{lem: new identity c 1 square}. The second one is the introduction of the auxiliary time $\tmed$ in \eqref{e: definition of t med} and its study. 

\section{Quasi-stationary distributions and hitting times: a simple approach}
\label{s:preparation}
In this section $X = (X_t)_{t\geq 0}$ is an irreducible reversible (rate 1) continuous-time Markov chain on a finite state space $S$ with transition matrix $P$ and stationary measure $\pi$ (thus the jump rate from every $x \in S$ is equal to 1, and at such a jump time, the chain jumps from $x$ to $y$ with probability $P(x,y)$).  
For $f,g:S \to \mathbb{R}$, the  expectation of $f$ is defined by $\mathbb{E}_{\pi}[f]:=\sum_{x \in S}\pi(x)f(x)$, the inner-product of $f$ and $g$ by $\langle f,g \rangle_{\pi}:=\mathbb{E}_{\pi}[fg]$, and the associated $L^2$ norm by $\|f\|_2:=\sqrt{\langle f,f \rangle_{\pi}}$.

This section presents an efficient self-contained approach to quasi-stationary distributions. Many arguments are -- unsurprisingly -- borrowed from \cite{AldousBrown1992}, as well as \cite[Section 3.6.5]{AldousFill} and \cite[Section 6.9]{Keilson1979book}. What makes our approach simpler is that we make a better use of the $L^2$ structure (with respect to the inner product $\langle \cdot, \cdot \rangle_\pi$) of the spaces  
\begin{equation}
    C_0(B):=\{f \in \mathbb{R}^S:f(x)=0 \text{ for all }a \in B^c \} \, ,
\end{equation}
for subsets $\eset \ne B \subsetneq S$.

\subsection{Killed chains and quasi-stationary distributions}\label{s:Killed chains and quasi-stationary distributions}
Let $\eset \ne B\subsetneq S$. Set, for $x,y\in S$,
\begin{equation}
P_B(x,y):=P(x,y)\mathbf{1}\{x,y \in B \} \, .
\end{equation}
 This defines a strictly sub-stochastic matrix $P_B$ on $S\times S$. In particular, if $x,y\in S$ are such that $x\in A$ or $y\in A$, we have $P_B(x,y) = 0$. The matrix $P_B$ is the transition matrix of the chain \textit{killed} (or \textit{frozen}) upon hitting $A=B^c$.

\begin{lemma}
    Let $\eset \ne B\subsetneq S$. $P_B$ is self-adjoint with respect to the inner-product $\langle \cdot , \cdot \rangle_{\pi}$ on $C_0(B)$.
\end{lemma}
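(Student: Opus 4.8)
The plan is to reduce the statement to the detailed balance equations for the original chain. Recall that reversibility of $X$ means exactly that $\pi(x)P(x,y)=\pi(y)P(y,x)$ for all $x,y\in S$; equivalently, $P$ is self-adjoint with respect to $\langle\cdot,\cdot\rangle_\pi$ on all of $\mathbb{R}^S$. The point is that multiplying $P$ entrywise by the symmetric indicator $\mathbf{1}\{x,y\in B\}$ destroys neither the detailed balance relations nor, a posteriori, self-adjointness.

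First I would check that $P_B$ does map $C_0(B)$ into itself, so that the statement even makes sense: if $f\in C_0(B)$ and $x\in B^c$, then $P_B(x,y)=0$ for every $y$ by definition, hence $(P_Bf)(x)=0$ and $P_Bf\in C_0(B)$. Second, I would observe that $P_B$ inherits detailed balance: multiplying $P_B(x,y)=P(x,y)\mathbf{1}\{x,y\in B\}$ by $\pi(x)$ and using reversibility of $P$ together with the symmetry of the indicator in $x$ and $y$ gives
\[
\pi(x)P_B(x,y)=\pi(x)P(x,y)\mathbf{1}\{x,y\in B\}=\pi(y)P(y,x)\mathbf{1}\{x,y\in B\}=\pi(y)P_B(y,x).
\]
Third, I would conclude: for any $f,g\in C_0(B)$,
\[
\langle P_Bf,g\rangle_\pi=\sum_{x,y\in S}\pi(x)P_B(x,y)f(y)g(x)=\sum_{x,y\in S}\pi(y)P_B(y,x)f(y)g(x)=\langle f,P_Bg\rangle_\pi,
\]
which is precisely self-adjointness of $P_B$ on $C_0(B)$.

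I do not expect any genuine obstacle here; this lemma is essentially a bookkeeping remark. The only minor point worth care is the interplay between the ambient space $\mathbb{R}^S$ and the subspace $C_0(B)$: one should note that $\langle\cdot,\cdot\rangle_\pi$ restricts to a genuine inner product on $C_0(B)$ (because $\pi(x)>0$ for every $x\in S$), and that it is legitimate to restrict the bilinear identity of the previous display to $f,g\in C_0(B)$. Everything else is the elementary computation above.
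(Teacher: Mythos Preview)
Your proof is correct and follows essentially the same approach as the paper: establish the detailed balance $\pi(x)P_B(x,y)=\pi(y)P_B(y,x)$ from reversibility and the symmetry of the indicator, then expand $\langle P_Bf,g\rangle_\pi$ as a double sum and swap. Your additional check that $P_B$ preserves $C_0(B)$ is a welcome clarification that the paper omits.
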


\begin{proof}
    Let $x,y\in S$. By definition of reversibility, we have $\pi(x)P(x,y) = \pi(y)P(y,x)$. Therefore 
\begin{equation}\label{e: P_B form of balanced equations with respect to pi}
    \pi(x)P_B(x,y) = \pi(x)P(x,y)\mathbf{1}\{x,y \in B \} = \pi(y)P(y,x)\mathbf{1}\{x,y \in B \} = \pi(y)P_B(y,x) \, .
\end{equation}    
Now, let $f,g \in  C_0(B)$. Reordering terms and using \eqref{e: P_B form of balanced equations with respect to pi}, we obtain 
    \begin{equation}
    \begin{split}
        \langle f , P_B g \rangle_{\pi} (P_B g)(x) 
        & = \sum_{x\in B} \pi(x)f(x) \sum_{y\in B}P_B(x,y)g(y) \\ 
        & = \sum_{x,y\in B} \cg \pi(y)P_B(y,x)\cd g(y) f(x) \\
        & = \sum_{y\in B} \pi(y) g(y) \sum_{x\in B}  P_B(y,x) f(x) = \langle  g, P_B f \rangle_{\pi} \, ,
    \end{split} 
    \end{equation}
i.e.\ that $P_B$ is self-adjoint.
\end{proof}

Consider the restriction $\widetilde{P}_B$ of $P_B$ to $B\times B$ (obtained by deleting the rows and columns corresponding to $A$). Since $P$ is an irreducible stochastic matrix, $\widetilde{P}_B$ is strictly sub-stochastic. In particular, the eigenvalues of  $\widetilde{P}_B$ are strictly less than 1 in modulus. 

\medskip

\textbf{Assume moreover that $B$ is $X$-irreducible.}
Then the matrix $\widetilde{P}_B$ is primitive.
By the Perron--Frobenius theorem, the spectral radius $\rho$ of $\widetilde{P}_B$ is an eigenvalue of $P_B$ whose associated eigenspace is a line that contains a unique probability measure $\alpha_B$ on $B$.
The probability measure $\alpha_B$ (which naturally extends to $S$ by setting $\alpha_B(x) = 0$ for $x\in B^c$) is called the \textbf{quasi-stationary distribution} for the set $B$.
To summarise this in an equation, we have
\begin{equation}\label{eq: alpha left eigenvector}
    \alpha_B P_B = \rho \alpha_B \, .
\end{equation}
Moreover, since $P_B$ is $\langle \cdot , \cdot \rangle_{\pi}$-self-adjoint on $C_0(B)$, there exist $1>\rho = \gamma_1 = \gamma_1(B) > \gamma_2 = \gamma_2(B) \geq \ldots \geq \gamma_m = \gamma_m(B)$, where  $m:= |B|$; and an $\langle \cdot, \cdot \rangle_{\pi}$-orthonormal basis $(f_1,\ldots,f_m)$ basis of $C_0(B)$, such that $P_B f_i=\gamma_i f_i$ for all $1\leq i\leq m$. 
(Note that $f_1, \ldots, f_m$ are right eigenvectors of $P_B$.) Denote also $\lambda_i = \lambda_i(B):= 1-\gamma_i(B)$ for $1\leq i\leq m$.

Finally, the Radon--Nikodym derivative $\alpha_B/\pi:S \to \mathbb{R}$ of $\alpha$ with respect to $\pi$ is defined by
\begin{equation}
    (\alpha_B/\pi)(a):= \begin{cases}
        \alpha_B(a)/\pi(a) & \text{ if } a\in B \, , \\
        0 & \text{ if }  a \in B^c \, .
    \end{cases}
\end{equation}
Then for any $x\in S$, by \eqref{e: P_B form of balanced equations with respect to pi} and since $\alpha_B P_B = \rho \alpha_B$ by \eqref{eq: alpha left eigenvector}, we have
    \begin{equation}
    \begin{split}
          \pg P_B\frac{\alpha_B}{\pi}\pd (x) = \sum_{y\in B}P_B(x,y) \frac{\alpha_B(y)}{\pi(y)} = \sum_{y\in B}P_B(y,x) \frac{\alpha_B(y)}{\pi(x)} & = \frac{1}{\pi(x)} \sum_{y\in B} \alpha_B(y)P_B(y,x) \\
          & = \frac{\rho \alpha_B(x)}{\pi(x)} \, ,
    \end{split}
    \end{equation}
    that is, $ P_B(\alpha_B/\pi) = \rho (\alpha_B/\pi)$.
In other words the vector $\alpha_B/\pi$ is a right eigenvector of $P_B$ with eigenvalue $\rho$. Up to replacing $f_1$ by $-f_1$ to ensure that $f_1$ has at least one positive coefficient, we therefore have
   \begin{equation}\label{e: f_1 is alpha rescaled}
    f_1 = \frac{\alpha_B/\pi}{\|\alpha_B/\pi\|_2} \, .
\end{equation}

Define also a matrix $I_B$ by $I_B(x,y) = \mathbf{1}\{x=y \in B \}$ for $x,y\in S$, and write $\lambda = \lambda_1= 1-\rho$. In words, $\lambda$ is the smallest eigenvalue of $I_{B}-P_B$, and $Q_B:=P_B-I_B$ is the Markov generator corresponding to the rate 1 continuous-time version of this killed chain.

\medskip

We now recall (and reprove) that the quasi-stationary distribution $\alpha_B$ corresponding to the set $B$ satisfies that the first hitting time of $A=B^c$ under $\P_{\alpha_B}$ has (in continuous-time) an exponential distribution with parameter $\lambda = \lambda_1 (B)$, and that furthermore,
\begin{equation}\label{e: identity lambda gamma 1}
\lambda_1(B) = 1/\bbE_{\alpha_B}\cg T_A \cd.  
\end{equation}

\begin{lemma}\label{lem: QS reminders}
Let $\eset \ne B \subsetneq S$ be $X$-irreducible. Write $\alpha$ for $\alpha_B$. 
We have $\lambda = 1/\bbE_\alpha\cg T_A \cd$, and for any $t\geq 0$,
    \begin{equation}
        \bbP_\alpha(T_A>t) = e^{- t/\bbE_\alpha\cg T_A \cd} =e^{-\lambda t} \, .
    \end{equation}
\end{lemma}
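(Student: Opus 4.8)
The plan is to exploit the spectral decomposition of the killed chain established just above, together with the characterisation \eqref{e: f_1 is alpha rescaled} of the Perron eigenvector $f_1$ as a rescaled copy of $\alpha_B/\pi$. First I would set up the semigroup of the killed chain: writing $Q_B = P_B - I_B$ for the generator and $H_t := e^{t Q_B}$ for the corresponding sub-Markovian semigroup on $C_0(B)$, one has $H_t f_i = e^{-t\lambda_i} f_i$ for each $i$, since $P_B f_i = \gamma_i f_i$ gives $Q_B f_i = (\gamma_i - 1) f_i = -\lambda_i f_i$. The probabilistic meaning of $H_t$ is $\P_x(X_t \in \cdot,\ T_A > t)$, i.e.\ $(H_t \mathbf 1_B)(x) = \P_x(T_A > t)$ for $x \in B$, because the chain killed upon hitting $A$ has transition semigroup $H_t$ and $\mathbf 1_B$ is the indicator of ``still alive''.

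Next I would compute $\P_{\alpha}(T_A > t)$ directly. Since $\alpha P_B = \rho\,\alpha$ implies $\alpha Q_B = -\lambda\,\alpha$, we get $\alpha H_t = e^{-\lambda t}\alpha$ as measures; hence
\begin{equation}
\P_{\alpha}(T_A > t) = \sum_{x \in B}\alpha(x)(H_t\mathbf 1_B)(x) = (\alpha H_t)(\mathbf 1_B) = e^{-\lambda t}\,\alpha(\mathbf 1_B) = e^{-\lambda t},
\end{equation}
using $\alpha(B) = 1$. This is the exponential tail. Alternatively, and perhaps cleaner given the $L^2$ emphasis of the section, I would write $\mathbf 1_B = \sum_i c_i f_i$ in the orthonormal basis, observe $\P_{\alpha}(T_A>t) = \langle \alpha/\pi, H_t \mathbf 1_B\rangle_\pi = \sum_i c_i e^{-t\lambda_i}\langle \alpha/\pi, f_i\rangle_\pi$, and use that $\alpha/\pi$ is proportional to $f_1$ so only the $i=1$ term survives, giving $c_1 \|\alpha/\pi\|_2 \, e^{-\lambda t}$; evaluating at $t=0$ forces the prefactor to be $1$.

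Finally, to get $\lambda = 1/\bbE_\alpha[T_A]$, I would integrate the tail: $\bbE_\alpha[T_A] = \int_0^\infty \P_\alpha(T_A > t)\,dt = \int_0^\infty e^{-\lambda t}\,dt = 1/\lambda$, which is legitimate since $\lambda = \lambda_1(B) > 0$ because $B$ is $X$-irreducible (the spectral radius $\rho < 1$ strictly, as $\widetilde P_B$ is strictly sub-stochastic). Then \eqref{e: identity lambda gamma 1} follows, and substituting back gives $\P_\alpha(T_A > t) = e^{-t/\bbE_\alpha[T_A]}$. I expect no serious obstacle here; the one point requiring a little care is justifying that $T_A$ is genuinely a \emph{hitting time} of $A$ under the killed-chain description — i.e.\ that $(H_t\mathbf 1_B)(x)$ really equals $\P_x(T_A > t)$ — which is the standard identification of the killed semigroup with the original chain stopped at $T_A$, and the fact that $\mathbf 1_B \in C_0(B)$ so the spectral expansion applies. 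The only subtlety worth flagging explicitly is that the argument uses $B$ $X$-irreducible so that $\alpha_B$ and the Perron structure exist; without it the statement must be reformulated per Theorem \ref{thm: general bound bound with relaxation time intro}, but Lemma \ref{lem: QS reminders} as stated already assumes irreducibility.
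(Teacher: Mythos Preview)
Your proposal is correct and takes essentially the same approach as the paper: you use $\alpha Q_B=-\lambda\alpha$ to get $\alpha e^{tQ_B}=e^{-\lambda t}\alpha$, evaluate on $\mathbf 1_B$ to obtain the exponential tail, and integrate to recover $\bbE_\alpha[T_A]=1/\lambda$. The alternative $L^2$ route you sketch (orthogonality against $f_1$) is a slight elaboration but not needed; the paper's proof is just your first computation.
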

\begin{proof}
Let $t\geq 0$. Since $\alpha Q_B = -\lambda \alpha$, we have $\alpha e^{tQ_B} =e^{-\lambda t} \alpha$.
Therefore, denoting by $X_t^A$ the position at time $t$ of the chain killed (or frozen) upon hitting $A$, 
\begin{equation}
    \mathbb{P}_{\alpha}[T_{A}>t]= \bbP_\alpha(X^{A}_t \in B) = (\alpha e^{tQ_B})(B)=e^{-\lambda t}\alpha(B) = e^{-\lambda t} \, ,
\end{equation}
i.e.\ starting from $\alpha$ the law of $T_A$ is exponential with mean $1/\lambda$. Taking expectations we finally obtain that $1/\lambda=\mathbb{E}_{\alpha}[T_A]$. 
\end{proof}
\begin{remark}
With the same notation as in Lemma \ref{lem: QS reminders}, for all $x \in B$ and all $t \ge 0$, we have
    \begin{equation}
         (\alpha e^{tQ_B})(x) = \bbP_\alpha(X_t^A = x) =  \mathbb{P}_{\alpha}[X_{t}=x,T_{A}>t]  = \mathbb{P}_{\alpha}[X_{t}=x \mid T_{A}>t] \mathbb{P}_{\alpha}[T_{A}>t] \, ,
    \end{equation}
    i.e.\ since $\alpha e^{tQ_B}=e^{-\lambda t }\alpha$ and $\mathbb{P}_{\alpha}[T_{A}>t] = e^{-\lambda t}$,
\begin{equation}
    \mathbb{P}_{\alpha}[X_{t}=x \mid T_{A}>t] =\alpha(x) \, .
\end{equation}
This last equation justifies the name ``quasi-stationary distribution''.
\end{remark}

\textbf{If $B$ is not $X$-irreducible}, we can partition $B$ into $X$-irreducible components $B_1, \ldots, B_p$, and apply the Perron--Frobenius to each component. Then concatenating the orthonormal bases of the $C_0(B_i)$, we obtain an orthonormal basis $(f_1, \ldots, f_m)$ of $C_0(B)$, where again $m=|B|$, with associated eigenvalues $1>\rho = \gamma_1(B)\geq \gamma_2(B)\geq \ldots \geq \gamma_m(B)$. The main difference is that the matrix $\widetilde{P}_B$ is not primitive, and that we may have $\gamma_1(B) = \gamma_2(B)$.
In this case the maximal eigenvalues must come from different irreducible components of $B$ (because by the Perron--Frobenius theorem there is a unique maximal eigenvalue on each component, as described above), and are the eigenvalues $1-1/\bbE_{\alpha_{B_i}} T_A$ of the quasi-stationary distributions of these components. Denote by 
\begin{equation}
    \MaxIrr_X(B) := \ag I \in \Irr_X(B) \mid \bbE_{\alpha_I} T_A  = 1/(1-\gamma_1(B))\ad,
\end{equation}
the set of $X$-irreducible components of $B$ with maximal quasi-stationary hitting time of $B^c$. 
Then rephrasing what is written above, the multiplicity of $\gamma_1(B)$ in $P_B$ is the cardinality of $\MaxIrr_X(B)$.

\begin{remark}\label{rem: unique maximal eigenvalue up to adding laziness locally}
Assume that $|\MaxIrr_X(B)|\geq 2$ and let $M\in \MaxIrr_X(B)$. We may consider for $\varepsilon\in (0,1)$ a modified version $X^{(\varepsilon, M)}$ of the chain $X$ which has an additional $\varepsilon$ little bit of laziness in $M$; that is, the chain has a transition matrix $P^{(\varepsilon, M)}$ satisfying
\begin{equation}
 P^{(\varepsilon, M)}(x,y) =
    \begin{cases}
        \varepsilon + (1-\varepsilon)P(x,y) & \text{ if } x=y\in M,\\
        (1-\varepsilon)P(x,y) &  \text{ if } x \in M \text{ and } y \in S \backslash \ag x \ad,\\
        P(x,y) & \text{ if } x\notin M \text{ and } y \in S.
    \end{cases}
\end{equation}
Then $X$ and $X^{(\varepsilon, M)}$ have the same behaviour, except that $X^{(\varepsilon, M)}$ is slower in $M$. (Indeed adding some laziness to the transition matrix has the same impact as changing the jump rate from $1$ to $(1-\varepsilon)$ when the walk is in $M$.)

Since the hitting time of $A$ from $\alpha_M$ is multiplied by $1/(1-\varepsilon)$ for the walk $X^{(\varepsilon, M)}$ and the hitting time of $A$ from $\alpha_I$ for each $I\in \Irr_X(B)\backslash\ag M \ad$ is unchanged, for $X^{(\varepsilon, M)}$ the value of $\gamma_1(B)$ is a little higher, and hence we have $\gamma_1(B) > \gamma_2(B)$ for the chain $X^{(\varepsilon, M)}$. But as $\varepsilon\to 0$, the probabilities of hitting a set before some time and the expected hitting times for $X^{(\varepsilon, M)}$ tend to those for $X$. Therefore, up to studying the chains $X^{(\varepsilon, M)}$ and then letting $\varepsilon \to 0$, 
\textbf{we may always assume that}
\begin{equation}
    \gamma_1(B) > \gamma_2(B),
\end{equation}
\textbf{that is, that there is a unique set $M\in \MaxIrr_X(B)$, which we denote by $M$ by default in what follows.}
(This is purely for convenience. All proofs work if $|\MaxIrr_X(B)|\geq 2$ and we consider a set $M \in \MaxIrr_X(B)$, but this would require each time specifying that up to reordering, the eigenvector $f_1$ is that corresponding to the chosen $M$.)
\end{remark}

\subsection{A theorem of Aldous and Brown, remastered}

An important quantity is the \textit{quasi-stationary default of stationarity} for an $X$-irreducible subset $M$, defined by
\begin{equation}
    R_M:= \frac{\|\alpha_M/\pi\|_2^2-1}{\|\alpha_M/\pi\|_2^2} \, .
\end{equation}

The quantity $R_M$ appears in \cite{AldousBrown1992}, as the intermediate quantity that they denote by $p_1$, but it appears that they did not observe that it takes the explicit form $\frac{\|\alpha_M/\pi\|_2^2-1}{\|\alpha_M/\pi\|_2^2}$. We can therefore, up to the value $\frac{\|\alpha_M/\pi\|_2^2-1}{\|\alpha_M/\pi\|_2^2}$ of $R_M$ and the removal of the irreducibility assumption, attribute the following theorem to Aldous and Brown.
\begin{thm}
\label{thm:ABrefinementintro} Let $X=(X_t)_{t\geq 0}$ be an irreducible reversible continuous-time Markov chain on a finite state space $S$, and denote its  stationary distribution by $\pi$. Let $\eset \ne A\subsetneq S$ and $M \in \MaxIrr_X(A^c)$. 
Then
\begin{equation}
\label{e:ABrefinement1}
    \pi(A) \le 1 - \frac{\mathbb{P}_{\pi}(T_A>t)}{\mathbb{P}_{\alpha_M}(T_A>t)} \le R_M \, ,
\end{equation}
for all $t\geq 0$, and
\begin{equation}\label{e:ABrefinement4}
\pi(A) \le 1-\frac{\mathbb{E}_{\pi}[T_A]}{\mathbb{E}_{\alpha_M}[T_A]}  \le R_M \, .
\end{equation}
\end{thm}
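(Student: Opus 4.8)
The plan is to work in the $L^2(\pi)$ framework set up above, using the orthonormal eigenbasis $(f_1,\dots,f_m)$ of $P_B$ on $C_0(B)$ with $B := A^c$, where by Remark \ref{rem: unique maximal eigenvalue up to adding laziness locally} we may assume $\gamma_1(B) > \gamma_2(B)$, so that $f_1 = (\alpha_M/\pi)/\|\alpha_M/\pi\|_2$ by \eqref{e: f_1 is alpha rescaled}. First I would expand the indicator $\mathbf{1}_B \in C_0(B)$ in this basis: write $\mathbf{1}_B = \sum_{i=1}^m c_i f_i$ with $c_i = \langle \mathbf{1}_B, f_i\rangle_\pi$. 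The key algebraic observation is the identity for the leading coefficient
\begin{equation}
    c_1 = \langle \mathbf{1}_B, f_1\rangle_\pi = \frac{\langle \mathbf{1}_B, \alpha_M/\pi\rangle_\pi}{\|\alpha_M/\pi\|_2} = \frac{\alpha_M(B)}{\|\alpha_M/\pi\|_2} = \frac{1}{\|\alpha_M/\pi\|_2} \, ,
\end{equation}
using $\langle \mathbf{1}_B, \alpha_M/\pi\rangle_\pi = \sum_{x\in B}\pi(x)(\alpha_M(x)/\pi(x)) = \alpha_M(B) = 1$. Hence $c_1^2 = 1/\|\alpha_M/\pi\|_2^2 = 1 - R_M$, while Parseval gives $\sum_i c_i^2 = \|\mathbf{1}_B\|_2^2 = \pi(B)$. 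Therefore $\sum_{i\ge 2} c_i^2 = \pi(B) - (1-R_M) = R_M - \pi(A)$, which is $\ge 0$ and will be the quantity controlling the error.

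Next I would compute both ratios in terms of these coefficients via the spectral decomposition of the killed semigroup $e^{tQ_B} = e^{-t}e^{tP_B}$. For the probability ratio, $\mathbb{P}_\pi(T_A > t) = \langle \mathbf{1}_B, e^{tQ_B}\mathbf{1}_B\rangle_\pi = \sum_i c_i^2 e^{-\lambda_i t}$, whereas $\mathbb{P}_{\alpha_M}(T_A > t) = e^{-\lambda_1 t}$ by Lemma \ref{lem: QS reminders}. Thus
\begin{equation}
    \frac{\mathbb{P}_\pi(T_A>t)}{\mathbb{P}_{\alpha_M}(T_A>t)} = \sum_{i=1}^m c_i^2 e^{-(\lambda_i - \lambda_1)t} \, ,
\end{equation}
and since $\lambda_i \ge \lambda_1$ for all $i$, each exponential factor lies in $(0,1]$, so the sum is at most $\sum_i c_i^2 = \pi(B)$ and at least $c_1^2 = 1 - R_M$. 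This gives $1 - \pi(B) \le 1 - \mathbb{P}_\pi(T_A>t)/\mathbb{P}_{\alpha_M}(T_A>t) \le R_M$, i.e. \eqref{e:ABrefinement1}. For the expectation ratio I would integrate in $t$: $\mathbb{E}_\pi[T_A] = \int_0^\infty \mathbb{P}_\pi(T_A>t)\,dt = \sum_i c_i^2/\lambda_i$ and $\mathbb{E}_{\alpha_M}[T_A] = 1/\lambda_1$, so
\begin{equation}
    \frac{\mathbb{E}_\pi[T_A]}{\mathbb{E}_{\alpha_M}[T_A]} = \sum_{i=1}^m c_i^2 \frac{\lambda_1}{\lambda_i} \, ,
\end{equation}
and again $\lambda_1/\lambda_i \in (0,1]$, so the same sandwiching argument yields \eqref{e:ABrefinement4}.

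The main subtlety — the step I would be most careful about — is the non-irreducible case: one must justify that the concatenated eigenbasis still diagonalises $P_B$ on all of $C_0(B)$, that $\lambda_1 = \min_i \lambda_i$ genuinely corresponds to the chosen maximiser $M$ (this is where the reduction in Remark \ref{rem: unique maximal eigenvalue up to adding laziness locally} to $\gamma_1 > \gamma_2$, or careful bookkeeping of which component $f_1$ belongs to, is used), and that passing to the limit $\varepsilon \to 0$ in the lazified chains preserves the inequalities (both sides are continuous in the transition probabilities). A secondary point is checking that $P_B$ restricted to the full space $\mathbb{R}^S$ versus $C_0(B)$ does not introduce spurious eigenvalues — but since $P_B$ annihilates coordinates in $A$ and the relevant quantities $\mathbf{1}_B, \alpha_M/\pi$ all lie in $C_0(B)$, this is harmless. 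Everything else is routine manipulation of geometric/exponential sums, so no genuinely hard estimate is involved; the content is entirely the identity $c_1^2 = 1 - R_M$ together with the monotonicity $\lambda_i \ge \lambda_1$.
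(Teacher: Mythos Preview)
Your proof is correct and follows essentially the same approach as the paper: you reproduce the key identity $c_1^2 = 1 - R_M$ (the paper's Lemma \ref{lem: new identity c 1 square}), use the spectral expansion $\P_\pi(T_A>t) = \sum_i c_i^2 e^{-\lambda_i t}$ (the paper's \eqref{e:mixture1}) together with $\sum_i c_i^2 = \pi(B)$ (the paper's \eqref{e:identity sum c i square}), and sandwich the ratio between $c_1^2$ and $\sum_i c_i^2$; the paper bounds the numerator first and then divides, while you form the ratio and bound term-by-term, but this is a cosmetic difference. Your handling of the non-irreducible case via Remark \ref{rem: unique maximal eigenvalue up to adding laziness locally} and the limiting argument also matches the paper's.
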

Theorem \ref{thm:ABrefinementintro} is better than Theorem \ref{T:ABintro} since we always have $R_M \leq \trel/\bbE_{\alpha_M} \cg T_A \cd$ by \cite[Lemma 10]{AldousBrown1992}.
We prove a modified variant of this result as Lemma \ref{lem:cheapboundon2norm}. However in general the bound $\trel/\bbE_{\alpha_M}\cg T_A \cd$ is far from being sharp: using Theorem \ref{thm:ABrefinementintro} together with bounds on $R_M$ leads to more precise results, as we will see.

\begin{remark}\label{rem: discrete and continuous}
We focus on the continuous-time setup. The analysis can easily be extended to the discrete-time setup with the following two minor modifications.
\begin{enumerate}
    \item Terms of the form $\exp(-(1-\gamma_i)t)$ need to be replaced with $\gamma_i^t$.
    \item The lower bounds on the probability that $T_{A}>t$ apply for even $t$ but could potentially fail for odd $t$; however if all eigenvalues of the transition matrix of the chain killed at $A$ are non-negative (e.g., if $\min_{x\in S}P(x,x) \ge 1/2$), they will hold for all integer $t \geq 0$.
    \item The identity $\bbP_\alpha(T_A>t) = \exp(-t/\bbE_\alpha T_A)$ has to be replaced by $(1-1/\bbE_\alpha T_A)^t$, where $M\in \MaxIrr_X(A^c)$ and $\alpha = \alpha_M$, but the hitting time $\bbE_\alpha T_A$ is unchanged.
\end{enumerate}   
\end{remark}

\subsection{Hitting times and mixtures of exponentials}

We now present a simple proof of the well-known fact that for continuous-time reversible chains, the hitting time of a set $A$ (such that $\eset \ne A \subsetneq S$) starting from the stationary distribution $\pi$ is a mixture of exponentials. In other words, its law is completely monotone. 

As before we write $B = A^c$, and \textbf{we do not assume that $B$ is $X$-irreducible.} We already identified $P_B$ as an operator on $C_0(B)$. Let us now do the same for $e^{tQ_B}$. For $f\in C_0(B)$ and $x\in S$, we have 
\begin{equation}\label{eq: identity expectation exponential Q B}
    (e^{tQ_B}f)(x)=\sum_{b\in B} \cg e^{tQ_B} \cd(x,b)f(b)=\E_x[f(X_{t}) \mathbf{1}\{T_{A}>t \}] \, .
\end{equation}
Let us decompose $1_B$ in the $\langle \cdot, \cdot \rangle_{\pi}$-orthonormal basis $(f_1,\ldots,f_m)$ basis of $C_0(B)$: there exist $c_1,\ldots, c_m \in \bbR$ such that
\begin{equation}
    1_B=\sum_{i=1}^m c_{i}f_i.
\end{equation} 
Moreover, for $1\leq i\leq m$, by orthonormality and since $f_i$ is supported on $B$, we have
\begin{equation}\label{e: c_i expectation form}
    c_i = \langle 1_B, f_i \rangle_{\pi} = \sum_{b\in B} \pi(b) f_i(b) = \sum_{x\in S} \pi(x) f_i(x) = \mathbb{E}_{\pi}[f_i] \, .
\end{equation}
Then, for $t\geq 0$ we have the identity
\begin{equation}
    e^{tQ_B} 1_{B}=\sum_{i=1}^m c_{i}e^{-t(1-\gamma_i)} f_i \, .
\end{equation}
Moreover, applying \eqref{eq: identity expectation exponential Q B} with $f = 1_B$ gives
\begin{equation}
\label{e:mixture0}
    \P_\pi[T_{A}>t] = \sum_{x\in S} \pi(x) \bbE_x \cg \mathbf{1} \ag T_{A}>t \ad \cd = \sum_{b\in B} \pi(b) (e^{tQ_B}1_B)(b) = \langle  e^{tQ_B} 1_{B},1_{B} \rangle_{\pi} \, .
\end{equation}
Therefore by orthogonality we obtain the identity
\begin{equation}
\label{e:mixture1}
\begin{split}
   \P_\pi[T_{A}>t] =\sum_{i=1}^m c_{i}^{2}e^{-t(1-\gamma_i)} \, .
\end{split}
\end{equation}
Equation \eqref{e:mixture1} shall play a crucial role in the proof of Theorem \ref{thm:ABrefinementintro} and in our refinement of the Aldous--Brown approximations.
Finally, applying \eqref{e:mixture1} with $t= 0$ gives
\begin{equation}\label{e:identity sum c i square}
    \sum_{i=1}^m c_i^2 = \bbP_\pi[T_A>0] = 1-\pi(A) = \pi(B) \, .
\end{equation}

\subsection{Proof of Theorem \ref{thm:ABrefinementintro}}
Recall that $B = A^c$, and that by Remark \ref{rem: unique maximal eigenvalue up to adding laziness locally} we may assume that there is a unique component $M\in \MaxIrr_X(B)$. A simple but key new observation is the following.

\begin{lemma}\label{lem: new identity c 1 square}
    We have $c_1^2 = 1-R_M$.
\end{lemma}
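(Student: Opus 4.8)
The plan is to compute $c_1$ directly from the explicit description of $f_1$ given in \eqref{e: f_1 is alpha rescaled}, combined with the expectation formula for the coefficients in \eqref{e: c_i expectation form}. Recall that by Remark \ref{rem: unique maximal eigenvalue up to adding laziness locally} we may assume $\gamma_1(B)>\gamma_2(B)$, so that $f_1$ is (up to sign) the rescaled Radon--Nikodym derivative $f_1 = (\alpha_M/\pi)/\|\alpha_M/\pi\|_2$, where $M$ is the unique maximal $X$-irreducible component; the eigenvector associated to the maximal eigenvalue is supported on $M$ and is, up to the normalisation, exactly $\alpha_M/\pi$.

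First I would write, using \eqref{e: c_i expectation form},
\begin{equation}
    c_1 = \mathbb{E}_\pi[f_1] = \frac{1}{\|\alpha_M/\pi\|_2} \sum_{x\in S} \pi(x)\, (\alpha_M/\pi)(x) = \frac{1}{\|\alpha_M/\pi\|_2} \sum_{x\in M} \pi(x)\,\frac{\alpha_M(x)}{\pi(x)} = \frac{\alpha_M(M)}{\|\alpha_M/\pi\|_2} = \frac{1}{\|\alpha_M/\pi\|_2} \, ,
\end{equation}
since $\alpha_M$ is a probability measure supported on $M$. Squaring gives $c_1^2 = 1/\|\alpha_M/\pi\|_2^2$. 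Then I would simply rewrite the definition of $R_M$: from
\begin{equation}
    R_M = \frac{\|\alpha_M/\pi\|_2^2 - 1}{\|\alpha_M/\pi\|_2^2} = 1 - \frac{1}{\|\alpha_M/\pi\|_2^2} \, ,
\end{equation}
we obtain $1 - R_M = 1/\|\alpha_M/\pi\|_2^2 = c_1^2$, which is the claim.

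There is essentially no obstacle here: the only thing to be careful about is the sign ambiguity in the choice of $f_1$, which is harmless since we only need $c_1^2$, and the bookkeeping that the eigenvector for $\gamma_1(B)$ (after the laziness reduction of Remark \ref{rem: unique maximal eigenvalue up to adding laziness locally}) is precisely the one attached to $M$, so that \eqref{e: f_1 is alpha rescaled} applies with $B$ replaced by $M$. The substance of the lemma is the identification $c_1 = \alpha_M(M)/\|\alpha_M/\pi\|_2$, which collapses because $\alpha_M$ is a probability measure — this is the "simple but key" observation the authors advertise, turning the first coefficient into the explicit quantity driving the refined error term.
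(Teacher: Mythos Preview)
Your proof is correct and follows essentially the same approach as the paper's: compute $c_1=\mathbb{E}_\pi[f_1]$ using the explicit form $f_1=(\alpha_M/\pi)/\|\alpha_M/\pi\|_2$, observe that $\mathbb{E}_\pi[\alpha_M/\pi]=1$ since $\alpha_M$ is a probability measure, and conclude via the definition of $R_M$. Your handling of the sign ambiguity and of the reduction to the unique maximal component $M$ via Remark~\ref{rem: unique maximal eigenvalue up to adding laziness locally} matches the paper's reasoning as well.
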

\begin{proof}
    By \eqref{e: c_i expectation form} we have $c_1 = \bbE_\pi[f_1]$. Moreover applying \eqref{e: f_1 is alpha rescaled} to $M$ and $M^c$ (in place of $B$ and $A$, since in \eqref{e: f_1 is alpha rescaled} the set $B$ was assumed to be irreducible), we get that $f_1=\frac{\alpha_M/\pi}{\|\alpha_M/\pi\|_2}$. We therefore get
\begin{equation}
    c_1 = \bbE_\pi \cg f_1 \cd = \frac{1}{\|\alpha_M/\pi\|_2}  \bbE_\pi \cg \alpha_M/\pi \cd =  \frac{1}{\|\alpha_M/\pi\|_2} \, ,
\end{equation}
and therefore $c_1^2 = 1-R_M$ by definition of $R_M$.
\end{proof}
Let us now prove Theorem \ref{thm:ABrefinementintro}.

\begin{proof}[Proof of Theorem \ref{thm:ABrefinementintro}]
    Let $t\geq 0$. By \eqref{e:mixture1} and since $\gamma_i\leq \gamma_1$ for $1\leq i\leq m$, we have 
    \begin{equation}
        c_1^2 e^{-t(1-\gamma_1)} \leq  \P_\pi[T_{A}>t] = \sum_{i=1}^m c_{i}^{2}e^{-t(1-\gamma_i)} \leq \pg \sum_{i=1}^m c_{i}^{2}\pd e^{-t(1-\gamma_1)} \, .
    \end{equation}
    
Moreover, $c_1^2 = 1-R_M$ by Lemma \ref{lem: new identity c 1 square} and $\sum_{i=1}^m c_{i}^{2} = 1-\pi(A)$ by \eqref{e:identity sum c i square}. Recalling \eqref{e: identity lambda gamma 1} and Lemma \ref{lem: QS reminders} we can rewrite the previous display as
\begin{equation}\label{e: eq inter proof theorem AB R probabilities}
    (1-R_M)\bbP_{\alpha_M}[T_A >t] \leq \P_\pi[T_{A}>t] \leq (1-\pi(A))\bbP_{\alpha_M}[T_A >t] \, ,
\end{equation}
which is equivalent to \eqref{e:ABrefinement1}.
Finally, integrating \eqref{e: eq inter proof theorem AB R probabilities} from 0 to $\infty$ gives 
\begin{equation}\label{e: eq inter proof theorem AB R expectations}
    (1-R_M)\bbE_{\alpha_M}[T_A] \leq \bbE_\pi[T_{A}] \leq (1-\pi(A))\bbE_{\alpha_M}[T_A] \, ,
\end{equation}
which is equivalent to \eqref{e:ABrefinement4}. This concludes the proof.
\end{proof}

\subsection{A simple deduction of (a variant of) Theorem \ref{T:ABintro} from Aldous and Brown}
For the sake of completeness, we prove a variant (see Remark \ref{rem: essentiall Lemma 10 b of AB} below) of a lemma of Aldous and Brown \cite[Lemma 10 part (b)]{AldousBrown1992}, which in conjunction with Theorem \ref{thm:ABrefinementintro} immediately implies Theorem \ref{T:ABintro}. We note that while the statement of the next lemma is essentially identical to that in \cite{AldousBrown1992}, its derivation in \cite{AldousBrown1992} is more complicated than the one presented here.
\begin{lemma}
\label{lem:cheapboundon2norm}
Let $X=(X_t)_{t\geq 0}$ be an irreducible reversible continuous-time Markov chain on a finite state space $S$, and denote its stationary distribution by $\pi$. Let $\eset \ne A\subsetneq S$ and $M \in \MaxIrr_X(A^c)$. 
Then
\begin{equation*}
    R_M \le \frac{\trel}{\mathbb{E}_{\alpha_M}[T_A]} \, . 
\end{equation*}
\end{lemma}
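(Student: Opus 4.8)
The plan is to express $R_M$ purely in terms of the spectral data $(c_i, \gamma_i)$ already set up, and then exploit the completely monotone representation \eqref{e:mixture1} together with the crispest available comparison between $\bbE_\pi[T_A]$ and the relaxation time. First I would record, from Lemma \ref{lem: new identity c 1 square} and \eqref{e:identity sum c i square}, that
\begin{equation*}
    R_M = 1 - c_1^2 = \pi(B) - \sum_{i=2}^m c_i^2 + \pi(A) = \sum_{i=2}^m c_i^2 + \pi(A) - \pi(B) \quad\text{? }
\end{equation*}
— more carefully, $1 - c_1^2 = 1 - \big(\pi(B) - \sum_{i\ge 2} c_i^2\big) = \pi(A) + \sum_{i\ge 2} c_i^2$, so $R_M = \pi(A) + \sum_{i=2}^m c_i^2$. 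The goal $R_M \le \trel/\bbE_{\alpha_M}[T_A] = \trel \,\lambda_1$ thus reduces to bounding $\sum_{i\ge 2} c_i^2 + \pi(A)$ by $\trel(1-\gamma_1)$.

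The natural lever is to relate $\bbE_\pi[T_A]$ to the mixture \eqref{e:mixture1} by integrating: $\bbE_\pi[T_A] = \int_0^\infty \bbP_\pi[T_A>t]\,dt = \sum_{i=1}^m c_i^2/(1-\gamma_i)$. On the other hand the left inequality of \eqref{e: eq inter proof theorem AB R expectations} gives $\bbE_\pi[T_A] \ge (1-R_M)/\lambda_1 = c_1^2/(1-\gamma_1)$, which is just the $i=1$ term; the slack is exactly $\sum_{i\ge 2} c_i^2/(1-\gamma_i)$. Since $1-\gamma_i \ge 1-\gamma_2$ for $i\ge 2$, and since the killed-chain eigenvalues $\gamma_i$ interlace the eigenvalues $\theta_j$ of $P$ (the interlacing theorem, which the paper already invokes in Remark \ref{rem: conditions unique maximal eigenvalue}), one gets $1-\gamma_2 \ge 1-\theta_2 = 1/\trel$, i.e. $1/(1-\gamma_i) \le \trel$ for all $i \ge 2$. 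Hence $\sum_{i\ge 2} c_i^2/(1-\gamma_i)$ is squeezed between $\sum_{i\ge 2}c_i^2/(1-\gamma_1)$-type quantities; the useful direction here is $\sum_{i\ge 2} c_i^2 \le \trel \sum_{i\ge 2} c_i^2/(1-\gamma_2)\cdot(1-\gamma_2)$... — rather, I want a clean chain, so I would instead argue directly: using $\bbE_\pi[T_A]\le \trel$ is false in general, so the right move is to bound $R_M$ via $\bbE_{\alpha_M}[T_A]$, as follows. From \eqref{e:identity sum c i square} and Lemma \ref{lem: new identity c 1 square},
\begin{equation*}
    R_M = \pi(A) + \sum_{i=2}^{m} c_i^2 .
\end{equation*}
Now $\pi(A) = 1 - \sum_i c_i^2 \le$ contributes, and for the tail I use $c_i^2 \le c_i^2 \cdot \trel (1-\gamma_i)$ since $(1-\gamma_i)\trel \ge (1-\gamma_2)\trel \ge 1$ for $i\ge 2$, giving $\sum_{i\ge 2} c_i^2 \le \trel \sum_{i\ge 2} c_i^2(1-\gamma_i)$. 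Combined with a matching treatment of $\pi(A)$ via the identity $\bbE_{\alpha_M}[T_A] = 1/(1-\gamma_1)$ and $\bbE_\pi[T_A]=\sum_i c_i^2/(1-\gamma_i)$, one assembles $R_M \le \trel(1-\gamma_1) = \trel/\bbE_{\alpha_M}[T_A]$.

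The main obstacle is getting the interlacing input in exactly the right form: I need that the \emph{second-largest} eigenvalue $\gamma_2(A^c)$ of the killed operator $P_B$ (on the full space, possibly across several irreducible components) satisfies $\gamma_2(A^c) \le \theta_2(P)$, so that $1-\gamma_i \ge 1/\trel$ for every $i\ge 2$. When $A^c$ is $X$-irreducible this is the standard eigenvalue interlacing for a principal submatrix of (a symmetrised version of) $P$; when $A^c$ has several components one must also invoke, as in Remark \ref{rem: conditions unique maximal eigenvalue}, that at most one component can have $\bbE_{\alpha_I}[T_A] > \trel$, equivalently at most one $\gamma_1(B_i)$ exceeds $\theta_2$, so $\gamma_2(A^c)\le \theta_2$ still holds. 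Once that spectral fact is in hand, the rest is the bookkeeping sketched above; I would also double-check the edge case where the right-hand side $\trel/\bbE_{\alpha_M}[T_A]\ge 1$, in which the bound is vacuous since $R_M\le 1$ always, so we may freely assume $\bbE_{\alpha_M}[T_A]>\trel$ and in particular $\gamma_1 > \theta_2 \ge \gamma_2$.
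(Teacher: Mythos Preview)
Your proposal has a genuine gap at the step you label ``combined with a matching treatment of $\pi(A)$ \ldots\ one assembles $R_M \le \trel(1-\gamma_1)$''. The identity $R_M=\pi(A)+\sum_{i\ge 2}c_i^2$ is correct, and the tail bound $\sum_{i\ge 2}c_i^2\le \trel\sum_{i\ge 2}c_i^2(1-\gamma_i)$ via interlacing is fine (it is exactly Lemma~\ref{lem:interlacing}). But there is no way to finish from there using only the $P_B$--spectral identities you list. Those constraints --- $\sum_i c_i^2=\pi(B)$, $\lambda_i\ge 1/\trel$ for $i\ge 2$, and $\lambda_1=1/\bbE_{\alpha_M}[T_A]$ --- impose \emph{no} relation between $c_1$ and $\lambda_1$: formally one can take $c_1^2$ bounded away from $1$ while $\trel\lambda_1$ is arbitrarily small, so $R_M=1-c_1^2$ is not forced below $\trel\lambda_1$ by this data alone. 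The quantity $\sum_i c_i^2(1-\gamma_i)$ equals $\sum_{x\in B}\pi(x)P(x,A)\le \pi(A)$, which sends your tail bound in the wrong direction, and there is no companion inequality that controls $\pi(A)$ by $\trel\lambda_1$.

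What is missing is precisely what the paper does: use the variational characterisation of $\trel$ for the \emph{full} chain $P$, applied to the single test function $f_1$. Because $f_1$ is supported on $B$ one has $\langle (I-P)f_1,f_1\rangle_\pi=\lambda_1$, while $\mathrm{Var}_\pi f_1=\|f_1\|_2^2-(\bbE_\pi f_1)^2=1-c_1^2=R_M$; the Poincar\'e inequality $\langle (I-P)f_1,f_1\rangle_\pi\ge \mathrm{Var}_\pi f_1/\trel$ then gives $\lambda_1\ge R_M/\trel$ in one line. So the obstacle is not ``getting interlacing in the right form'' --- interlacing only tells you about $\gamma_2$, and that part of your argument is already handled by Lemma~\ref{lem:interlacing}. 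The needed input is the Dirichlet form of $P$ (not $P_B$), and once you bring it in the decomposition $R_M=\pi(A)+\sum_{i\ge 2}c_i^2$ becomes unnecessary.
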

\begin{proof}
First, by Remark \ref{rem: unique maximal eigenvalue up to adding laziness locally} we may assume that there is a unique set $M\in \MaxIrr_X(A^c)$.

Since $f_1=\frac{\alpha_M/\pi}{\|\alpha_M/\pi\|_2}$ is supported on $M$ (and hence on $B=A^c$) we get that $((I-P) f_{1})(x)f_{1}(x)=\lambda f_1(x)^2$ for all $x\in S$, where we recall that $\lambda = 1-\gamma_1(B) = 1/\bbE_{\alpha_M}\cg T_A \cd$. This, together with the fact that $\|f_{1}\|_2=1$ gives that
\begin{equation}
    \frac{1}{\mathbb{E}_{\alpha_M}[T_{A}]}=\lambda=\frac{\langle (I-P) f_{1},f_{1} \rangle_{\pi}}{\|f_1\|_2^2}=\mathrm{Var}_{\pi}f_1 \frac{\langle (I-P) f_{1},f_{1} \rangle_{\pi}}{\mathrm{Var}_{\pi}f_1}.
\end{equation}
Using the extremal characterisation of the relaxation-time (see, e.g., Theorem 3.1 in \cite{BerestyckiNotesMixingTimes}; note that the proof of that result and identities works for substochastic $\pi-$reversible transition matrices), this implies that  $  \frac{\langle (I-P) f_{1},f_{1} \rangle_{\pi}}{\mathrm{Var}_{\pi}f_1} \ge 1/\trel$, and observing that 
$$\mathrm{Var}_{\pi}f_1=\frac{\|\alpha_M/\pi-1\|_2^2}{\|\alpha_M/\pi\|_2^2}=\frac{\|\alpha_M/\pi\|_2^2-1}{\|\alpha_M/\pi\|_2^2} = R_M$$ 
concludes the proof.
\end{proof}

\begin{remark}\label{rem: essentiall Lemma 10 b of AB}
    The bound proved in \cite[Lemma 10 part (b)]{AldousBrown1992} is $\|\alpha_M/\pi\|_2^2 \leq (1-\trel/\bbE_\pi [T_A])^{-1}$. In other words, since $R_M  = 1 - 1/\|\alpha_M/\pi\|_2^2$, their bound can be rewritten as
\begin{equation}\label{eq: bound AB Lemma 10 b rewritten with R M}
    R_M  \leq \frac{\trel}{\bbE_\pi [T_A]}. 
\end{equation}    
Since $\bbE_\pi [T_A] \leq \bbE_{\alpha_M}[T_A]$, the bound from Lemma \ref{lem:cheapboundon2norm} is better. However,
    these bounds are useful only if $\trel =o(\bbE_\pi T_A)$, in which case $\bbE_{\alpha_M} [T_A] = (1+o(1))\bbE_\pi [T_A]$, so the bound \eqref{eq: bound AB Lemma 10 b rewritten with R M} is as useful as Lemma \ref{lem:cheapboundon2norm} in practice.
\end{remark}

\section{Improved lower bound in the Aldous--Brown approximation}

\subsection{Interlacing relaxation times}
Let $\eset \ne A \subsetneq S$ and set $B = A^c$. Recall that we do not assume that the restriction of the chain to $B$ is irreducible, and that we may assume that $\MaxIrr_X(B)$ is a singleton $\ag M \ad$ by Remark \ref{rem: unique maximal eigenvalue up to adding laziness locally}. We also write $\pi_A(\cdot) = \pi(\cdot) / \pi(A)$.

We recall the construction of the auxiliary Markov chain in which $A = B^c$ is collapsed into a single point. Its state space is $B\cup \{A \}$. Its transitions are given by $K(x,y)=P(x,y)$,  $K(x,\{A\})=P(x,A)$,   $K(\{A\},x)=\sum_{a \in A}\pi_A(a)P(a,x)$ for $x,y \in B$ and $K(\{A\},\{A\})=\sum_{a \in A}\pi_A(a)P(a,A)$.  This is a reversible chain w.r.t.\ the distribution $\hat \pi $ given by $\hat \pi(\{A\})=\pi(A)$ and $\hat \pi(x)=\pi(x)$ for all $x \in B$.  Denote the relaxation time of $K$ by $\trel(K)$.

\begin{lemma}[Interlacing relaxation times]
\label{lem:interlacing}
We have that
\begin{equation}
\label{e:interlacing}
\frac{1}{1-\gamma_2(B)} \le \trel(K) \le \trel .
\end{equation}
\end{lemma}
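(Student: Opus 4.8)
The plan is to prove the two inequalities in \eqref{e:interlacing} separately, both via the extremal (variational) characterisation of relaxation times, since all three quantities $1-\gamma_2(B)$, $1/\trel(K)$ and $1/\trel$ are spectral gaps that can be written as infima of Dirichlet-form / variance ratios over suitable function classes.

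\textbf{Lower bound $1/(1-\gamma_2(B)) \le \trel(K)$.} First I would recall that $1-\gamma_2(B)$ is the second-smallest eigenvalue of $I_B - P_B$ acting on $C_0(B)$, and that by the min--max (Courant--Fischer) theorem,
\begin{equation}
1-\gamma_2(B) = \min\left\{ \frac{\langle (I_B-P_B)g, g\rangle_\pi}{\|g\|_2^2} \;:\; g\in C_0(B),\ g\ne 0,\ \langle g, f_1\rangle_\pi = 0 \right\},
\end{equation}
where $f_1 = (\alpha_M/\pi)/\|\alpha_M/\pi\|_2$. On the other side, $1/\trel(K)$ is the spectral gap of $K$, i.e.
\begin{equation}
\frac{1}{\trel(K)} = \min\left\{ \frac{\cE_K(h,h)}{\mathrm{Var}_{\hat\pi}(h)} \;:\; h:B\cup\{A\}\to\bbR,\ \mathrm{Var}_{\hat\pi}(h)\ne 0 \right\},
\end{equation}
with $\cE_K(h,h) = \langle (I-K)h, h\rangle_{\hat\pi}$. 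The key step is to take a near-optimal test function $h$ for $K$, modify it so that it is constant on the collapsed state $\{A\}$ with value $0$ (replacing $h$ by $h - h(\{A\})$ changes neither the Dirichlet form nor the variance, since $K$-constants are in the kernel), and then pull it back to a function $g$ on $S$ supported on $B$ by $g(x) = h(x)\mathbf{1}\{x\in B\}$. The point is that for such a "collapsed" $h$, one has the exact comparison $\cE_K(h,h) = \langle (I_B-P_B)g, g\rangle_\pi$ and $\mathrm{Var}_{\hat\pi}(h) \ge \|g\|_2^2 - (\text{something})$; more precisely I expect $\mathrm{Var}_{\hat\pi}(h) \le \|g\|_2^2$ or an inequality in the favourable direction after also subtracting the $f_1$-component. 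The cleanest route is probably: show that $h \mapsto g$ gives a bijection (up to additive constants on the $K$ side) between the relevant test-function classes, and that one can additionally impose orthogonality to $f_1$ on the $C_0(B)$ side at no cost because the Perron eigenvector $f_1$, when transported to $K$, is related to the stationary vector $\hat\pi$; subtracting it corresponds exactly to the "centering" $\mathrm{Var}_{\hat\pi}$ already performs. Carrying this through yields $1/\trel(K) \le 1-\gamma_2(B)$, i.e.\ the claimed lower bound.

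\textbf{Upper bound $\trel(K) \le \trel$.} Here I would again use the variational characterisation, now going the other way: take a near-optimal test function $h$ for $K$ on $B\cup\{A\}$ (centered, $\langle h, \mathbf{1}\rangle_{\hat\pi}=0$) and lift it to a function $\tilde h$ on $S$ by setting $\tilde h(x) = h(x)$ for $x\in B$ and $\tilde h(a) = h(\{A\})$ for all $a\in A$ — that is, "un-collapse" $A$ by making $\tilde h$ constant on $A$. The construction of $K$ is precisely designed so that, thanks to $\hat\pi(\{A\}) = \pi(A)$ and the averaged transition rates $K(\{A\},x) = \sum_{a}\pi_A(a)P(a,x)$, one gets the exact identities $\mathrm{Var}_{\hat\pi}(h) = \mathrm{Var}_\pi(\tilde h)$ and $\cE_K(h,h) = \cE_P(\tilde h, \tilde h)$ — the Dirichlet form is preserved because edges within $A$ contribute zero (as $\tilde h$ is constant there) and edges between $A$ and $B$ are accounted for by the averaging in $K$. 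Therefore $1/\trel(K) = \cE_P(\tilde h,\tilde h)/\mathrm{Var}_\pi(\tilde h) \ge 1/\trel$, giving $\trel(K)\le\trel$.

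\textbf{Main obstacle.} The routine parts are the two lifting identities for the upper bound, which are essentially bookkeeping with the definition of $K$. The delicate step is the lower bound $1/(1-\gamma_2(B))\le \trel(K)$: one must argue carefully that restricting/centering does not lose the orthogonality to $f_1$, i.e.\ that the eigenvector $f_1$ of $P_B$ corresponds under the collapsing map to the "$\hat\pi$-mean" direction that $\mathrm{Var}_{\hat\pi}$ already quotients out. The cleanest way to see this, which I would aim to make rigorous, is to note that $K$ restricted to the orthogonal complement of constants is conjugate (via the collapsing map) to $P_B$ restricted to $C_0(B)\ominus\bbR f_1$ — an interlacing statement in the spirit of the Cauchy interlacing theorem for compressions of self-adjoint operators — so the spectral gap of $K$ is at least $1-\gamma_2(B)$. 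If a direct operator-compression argument is available it may be shorter than juggling explicit test functions, but either way the bookkeeping around the Perron eigenvector is where care is needed.
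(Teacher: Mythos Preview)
Your upper bound $\trel(K)\le\trel$ is correct and is exactly the paper's argument: lift a test function from $B\cup\{A\}$ to $S$ by making it constant on $A$, and check that Dirichlet form and variance are preserved.

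Your variational route for the lower bound, however, has a real gap. You want $1/\trel(K)\le 1-\gamma_2(B)$, and you propose to start from a near-optimal $h$ for $K$ and pull it back to $g\in C_0(B)$. With $h(\{A\})=0$ one does get $\cE_K(h,h)=\langle (I_B-P_B)g,g\rangle_\pi$ and $\mathrm{Var}_{\hat\pi}(h)=\|g\|_2^2-(\bbE_\pi g)^2\le\|g\|_2^2$, so the Rayleigh quotient of $h$ for $K$ \emph{dominates} that of $g$ for $I_B-P_B$; this yields only $1/\trel(K)\ge 1-\gamma_1(B)$, not the desired inequality. To reach $\gamma_2$ you would need $g\perp f_1$, and your justification --- that ``$f_1$, when transported to $K$, is related to the stationary vector $\hat\pi$; subtracting it corresponds exactly to the centering $\mathrm{Var}_{\hat\pi}$ already performs'' --- is false. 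The constant function is the top eigenvector of $K$, whereas the lift of $f_1$ (the quasi-stationary density $\alpha_M/\pi$, extended by $0$ on $\{A\}$) is \emph{not} constant; centering by $\hat\pi$ does not project out the $f_1$-component.

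The clean fix is the one you mention in passing and which the paper uses: $\widetilde P_B$ is obtained from $K$ by deleting the single row and column indexed by $\{A\}$, and $K$ is $\hat\pi$-reversible, hence symmetrisable. The Cauchy interlacing theorem for principal submatrices then gives directly that the second largest eigenvalue $\hat\lambda$ of $K$ satisfies $\hat\lambda\ge\gamma_2(B)$, i.e.\ $\trel(K)\ge 1/(1-\gamma_2(B))$. No test-function juggling is needed.
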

\begin{proof}
Denote the second largest eigenvalue of $K$ by $\hat \lambda$. By the extremal characterisation of the second largest eigenvalue (e.g.\ \cite[Remark 13.8]{LivreLevinPeres2019MarkovChainsAndMixingTimesSecondEdition}) we get that
\begin{equation}
    \frac{1}{\trel}=\min_{f:\mathrm{Var}_{\pi}f \neq 0}\frac{\langle (I-P) f,f \rangle_{\pi}}{\mathrm{Var}_{\pi}f} \le \min_{\substack{f:\mathrm{Var}_{\pi}f \neq 0 \\ f(a)=f(b) \text{ for all }a,b \in A}}\frac{\langle (I-P) f,f \rangle_{\pi}}{\mathrm{Var}_{\pi}f} \, .
\end{equation}
Observe that for $f$ such that $f(a)=f(b)$ for all $a,b \in A$ we have that $\frac{\langle (I-P) f,f \rangle_{\pi}}{\mathrm{Var}_{\pi}f}=\frac{\langle (I-K) \hat f,\hat f \rangle_{\pi}}{\mathrm{Var}_{\hat \pi}\hat f}$, where $\hat f(x)=f(x)$ for all $x \in B$ and $\hat f(\{A\})=f(a)$ for $a \in A$. Hence the r.h.s.\ of the last display equals $1-\hat \lambda=1/\trel(K)$. To conclude the proof it remains to show that $\hat \lambda \ge \gamma_2 $. Observe that  $\widetilde{P}_B$ is obtained from $K$ by deleting the row and column corresponding to $\{A\}$. It follows from the interlacing eigenvalues theorem that indeed $\hat \lambda \ge \gamma_2$, as desired.
\end{proof}

\subsection{Sharpness of Theorem \ref{thm:ABrefinementintro}.}

We check that Theorem \ref{thm:ABrefinementintro} is essentially optimal, in that $R_M$ is both an upper bound \emph{and} a lower bound for the quantities in that theorem, fairly generally. 

To see this, we consider the contribution of the first eigenvalue separately, using Lemma \ref{lem: new identity c 1 square}, to make the contribution of the first term of the sum in \eqref{e:mixture1} explicit. 
This gives, recalling that $1-\gamma_1 = \lambda = 1/\bbE_{\alpha_M}\cg T_A\cd$, for $t\geq 0$,
\begin{equation}
\label{e:mixture2}
    \P_\pi[T_{A}>t] = (1-R_M)e^{-t/\mathbb{E}_{\alpha_M}[T_A]}+\sum_{i=2}^m c_{i}^{2}e^{-t(1-\gamma_i)} \, .
\end{equation}
Let $t\geq 0$. Since all terms in the sum of \eqref{e:mixture2} are non-negative, $\gamma_2 \geq \gamma_i$ for $2\leq i\leq m$, and $1-\gamma_2 \geq 1/\trel$ by Lemma \ref{lem:interlacing}, we have
\begin{equation}
\label{e:mixture2bounded}
\begin{split}
     (1-R_M)\bbP_{\alpha_M}[T_A>t] & \leq \P_{\pi}[T_{A}>t] \\
     & \leq (1-R_M)\bbP_{\alpha_M}[T_A>t]+\pg \sum_{i=2}^m c_{i}^{2} \pd e^{-t(1-\gamma_2)} \\
     & \leq (1-R_M)\bbP_{\alpha_M}[T_A>t]+ ((1-\pi(A))-(1-R_M)) e^{-t/\trel} \, .
\end{split}
\end{equation}
Dividing by $\bbP_{\alpha_M}[T_A>t]$, subtracting $1$, multiplying by $-1$, and factorising by $R_M$, this can be rewritten as
\begin{equation}
\label{eq: mu pi tail comparison probabilities}
\begin{split}
     R_M\pg 1- \pg 1 - \frac{\pi(A)}{R_M}\pd \frac{e^{-t/\trel}}{\bbP_{\alpha_M}[T_A>t]}\pd \leq 1- \frac{\P_{\pi}[T_{A}>t]}{\bbP_{\alpha_M}[T_A>t]} 
     \leq R_M \, .
\end{split}
\end{equation}
Integrating \eqref{e:mixture2bounded}, we get 
\begin{equation}
\label{e:mixture2boundedintegrated}
     (1-R_M)\bbE_{\alpha_M}[T_A] \leq \bbE_{\pi}[T_{A}]
     \leq (1-R_M)\bbE_{\alpha_M}[T_A]+ (R_M-\pi(A))\trel \, ,
\end{equation}
and reordering the terms leads to
\begin{equation}\label{eq: mu pi tail comparison expectations}
     R_ M \pg 1 - \pg 1 - \frac{\pi(A)}{R_M}\pd\frac{\trel}{\bbE_{\alpha_M}[T_A]} \pd \leq 1- \frac{\bbE_{\pi}[T_A]}{\bbE_{\alpha_M}[T_A]}\leq R_M \, .
\end{equation}
Thus provided that $A$ is not too large, and $\trel = o( \E_{\alpha_M} (T_A))$, we see that $R_M$ is the optimal bound in \eqref{e:ABrefinement1} of Theorem \ref{thm:ABrefinementintro}. 

\begin{remark}
    Let us examine the lower bound \eqref{eq: mu pi tail comparison probabilities} above in the concrete case of tori $(\mathbb{Z}/ \ell \mathbb{Z})^d$ of sidelength $\ell\ge 2$ in dimension $d \ge 2$, and for sets $A$ of uniformly bounded size. Then $\trel \asymp \ell^2 = o(\bbE_{\alpha_M} T_A)$, so as soon as $t/\ell^2 \to \infty$, we have $\frac{e^{-t/\trel}}{\bbP_{\alpha_M}[T_A>t]} = o(1)$, and we therefore have the asymptotic equivalence (as $\ell \to \infty$)
    \begin{equation}
         1- \frac{\bbP_{\pi}[T_A>t]}{\bbP_{\alpha_M}[T_A>t]} \sim R_M \, .
    \end{equation}
\end{remark}

Any improvement on the results of Aldous and Brown \cite{AldousBrown1992} must therefore come from an improved bound on $R_M$.

\section{A general bound for reversible Markov chains}
\label{s:QS theory and AB}

We already saw that Theorem \ref{thm:ABrefinementintro} is at least as good as the bound of Aldous and Brown which we recalled as Theorem \ref{T:ABintro}. 
To make use of Theorem \ref{thm:ABrefinementintro} we need to prove a better upper bound on $R_M-\pi(A)$ than $\trel/\mathbb{E}_{\alpha_M}[T_A]$.
To this purpose, we need to understand more precisely the hitting time of $A$ starting from any state $x$, and we will also need to introduce an auxiliary time, $\tmed$.

As before, $X = (X_t)_{t\geq 0}$ is an irreducible reversible continuous-time Markov chain on a finite state space $S$ with stationary distribution $\pi$, $\eset \ne A\subsetneq S$, $B = A^c$, we may assume that $|\MaxIrr_X(B)| = 1$, and $M\in \MaxIrr_X(B)$.

\subsection{Hitting times when the walk starts at a given vertex}

Recall that $f_1=\frac{\alpha_M/\pi}{\|\alpha_M/\pi\|_2},f_2,\ldots,f_m$ is an orthonormal basis of $C_0(B)$ and that $e^{tQ_B}f_i = e^{-\lambda_i t} f_i$ for all $1\leq i\leq m$ and $t\geq 0$. 

\begin{lemma}\label{lem:decompositionfromx}
Let $\eset \ne A\subsetneq S$ and set $B = A^c$ (which is not assumed to be $X$-irreducible). For every $x\in B$ and $t\geq 0$, we have
    \begin{equation}
    \label{e:decompositionfromx}
    \P_x[T_{A}>t]=\sum_{i=1}^m c_{i}f_i(x)e^{-\lambda_it} \, .
    \end{equation}
    \end{lemma}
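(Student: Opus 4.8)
The plan is to expand $\mathbf{1}_B$ in the orthonormal basis $(f_1,\dots,f_m)$ of $C_0(B)$ and then apply the semigroup $e^{tQ_B}$ term by term, exactly as in the derivation of \eqref{e:mixture1} but without immediately taking the inner product against $\mathbf{1}_B$. Concretely, I would start from the identity \eqref{eq: identity expectation exponential Q B}, namely $(e^{tQ_B}f)(x)=\E_x[f(X_t)\mathbf{1}\{T_A>t\}]$, and specialise it to $f=\mathbf{1}_B$. For $x\in B$ this gives $(e^{tQ_B}\mathbf{1}_B)(x)=\E_x[\mathbf{1}_B(X_t)\mathbf{1}\{T_A>t\}]=\P_x[T_A>t]$, where the last equality holds because on the event $\{T_A>t\}$ the chain has not left $B$, so $X_t\in B$ and $\mathbf{1}_B(X_t)=1$ automatically. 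That reduces the claim to computing $(e^{tQ_B}\mathbf{1}_B)(x)$.

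For that computation I would use the decomposition $\mathbf{1}_B=\sum_{i=1}^m c_i f_i$ together with the eigenrelation $e^{tQ_B}f_i=e^{-\lambda_i t}f_i$ (recalling $\lambda_i=1-\gamma_i$), which is valid on all of $C_0(B)$ since the $f_i$ form an eigenbasis of $P_B$ hence of $Q_B=P_B-I_B$. Linearity of $e^{tQ_B}$ then yields $e^{tQ_B}\mathbf{1}_B=\sum_{i=1}^m c_i e^{-\lambda_i t} f_i$, and evaluating at $x\in B$ gives $\P_x[T_A>t]=\sum_{i=1}^m c_i f_i(x) e^{-\lambda_i t}$, which is \eqref{e:decompositionfromx}. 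The coefficients $c_i=\langle \mathbf{1}_B, f_i\rangle_\pi=\E_\pi[f_i]$ are exactly those already introduced in \eqref{e: c_i expectation form}, so no new quantities are needed.

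There is essentially no obstacle here: the lemma is a pointwise refinement of \eqref{e:mixture1} (indeed \eqref{e:mixture1} follows by multiplying \eqref{e:decompositionfromx} by $\pi(x)c_1\cdots$—more precisely by taking $\langle \cdot, \mathbf{1}_B\rangle_\pi$—and using orthonormality). The only minor point to state carefully is why $\mathbf{1}_B(X_t)=1$ on $\{T_A>t\}$, i.e.\ that the killed chain stays in $B$ before time $T_A$; this is immediate from the definition of $P_B$ and the killed dynamics. One should also note that the identity is stated only for $x\in B$, which is what is needed (for $x\in A$ both sides would need separate interpretation since $T_A=0$). So the proof is a two-line assembly of \eqref{eq: identity expectation exponential Q B}, the expansion $\mathbf{1}_B=\sum c_i f_i$, and the eigenrelation for $e^{tQ_B}$.
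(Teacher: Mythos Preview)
Your proposal is correct and takes essentially the same approach as the paper: both apply $e^{tQ_B}$ to the decomposition $\mathbf{1}_B=\sum_i c_i f_i$ and read off the value at $x$. If anything, your version is slightly more streamlined, since you invoke \eqref{eq: identity expectation exponential Q B} directly to identify $(e^{tQ_B}\mathbf{1}_B)(x)$ with $\P_x[T_A>t]$, whereas the paper rewrites this evaluation as the inner product $\langle e^{tQ_B}\mathbf{1}_B,\, \mathbf{1}_x/\pi(x)\rangle_\pi$ before expanding.
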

\begin{proof}
     Let $x\in B$ and $t\geq 0$. For $y\in B$, we have
\begin{equation}
    \P_x[X_t=y, T_{A}>t] = [e^{t Q_B}](x,y) = \langle e^{tQ_B}1_y, 1_x\rangle = \left\langle e^{tQ_B}1_y, \,  \frac{1_x}{\pi(x)}\right\rangle_{\pi} \, .
\end{equation}
Summing over all $y\in B$ and using the decomposition $1_B=\sum_{i=1}^m c_{i}f_i$, we obtain
\begin{equation}
    \P_x[T_{A}>t] =  \left\langle e^{tQ_B}1_B, \,  \frac{1_x}{\pi(x)} \right\rangle_{\pi} = \left\langle e^{tQ_B}\sum_{i=1}^m c_{i}f_i, \,  \frac{1_x}{\pi(x)} \right \rangle_{\pi} = \left\langle \sum_{i=1}^m c_{i}e^{-\lambda_it}f_i, \, \frac{1_x}{\pi(x)}\right\rangle_{\pi} \, .
\end{equation}
We deduce, developing the inner product, that
\begin{equation}
\begin{split}
     \P_x[T_{A}>t] = \sum_{i=1}^m c_i e^{-\lambda_it} \left\langle f_i, \frac{1_x}{\pi(x)}\right\rangle_{\pi} & = \sum_{i=1}^m c_i e^{-\lambda_it} \sum_{z\in S} \pi(z) f_i(z) \frac{1_x(z)}{\pi(x)} \\
     & = \sum_{i=1}^m c_i e^{-\lambda_it} f_i(x) \, ,
\end{split}
\end{equation}
which concludes the proof.
\end{proof}

\begin{lemma}\label{lem:identityhittingprobabilitiesfullsum}
Let $\eset \ne A\subsetneq S$ and set $B = A^c$ (which is not assumed to be $X$-irreducible). For every $t\geq 0$, we have
\begin{equation}
    \sum_{x\in B} \pi(x) (\bbP_x(T_A\leq t))^2 = \sum_{i= 1}^m c_i^2(1-e^{-\lambda_i t})^2 \, .
\end{equation}
\end{lemma}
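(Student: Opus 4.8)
The plan is to start from Lemma~\ref{lem:decompositionfromx}, which gives $\P_x[T_A>t]=\sum_{i=1}^m c_i f_i(x) e^{-\lambda_i t}$ for $x\in B$, and hence $\P_x[T_A\le t] = 1-\sum_{i=1}^m c_i f_i(x)e^{-\lambda_i t}$. A cleaner route is to write $1 = 1_B(x)$ for $x\in B$ and use $1_B = \sum_i c_i f_i$, so that for $x\in B$,
\begin{equation}
    \P_x[T_A\le t] = \sum_{i=1}^m c_i f_i(x)\bigl(1-e^{-\lambda_i t}\bigr) \, .
\end{equation}
Thus $\P_x[T_A\le t]$ is, as a function of $x$ restricted to $B$, the expansion in the orthonormal basis $(f_i)$ with coefficients $c_i(1-e^{-\lambda_i t})$. (Note the $f_i$ are supported on $B$, so this identity is between elements of $C_0(B)$.)

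Next I would compute $\sum_{x\in B}\pi(x)\bigl(\P_x[T_A\le t]\bigr)^2$ as the squared $L^2(\pi)$-norm of that element of $C_0(B)$. Since $g:= \sum_{i=1}^m c_i(1-e^{-\lambda_i t}) f_i \in C_0(B)$ agrees with $x\mapsto \P_x[T_A\le t]$ on $B$ and vanishes off $B$, we have
\begin{equation}
    \sum_{x\in B}\pi(x)\bigl(\P_x[T_A\le t]\bigr)^2 = \sum_{x\in S}\pi(x) g(x)^2 = \|g\|_2^2 \, .
\end{equation}
Then by Parseval / orthonormality of $(f_1,\ldots,f_m)$ with respect to $\langle\cdot,\cdot\rangle_\pi$,
\begin{equation}
    \|g\|_2^2 = \sum_{i=1}^m \bigl(c_i(1-e^{-\lambda_i t})\bigr)^2 = \sum_{i=1}^m c_i^2(1-e^{-\lambda_i t})^2 \, ,
\end{equation}
which is exactly the claimed identity.

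There is essentially no hard step here: the only thing to be careful about is the bookkeeping that $x\mapsto \P_x[T_A\le t]$ extended by $0$ on $A$ really equals the function $g\in C_0(B)$ — i.e.\ that subtracting the two expansions of $1_B$ (the trivial one giving $\P_x[T_A\le t]+\P_x[T_A>t]=1$ on $B$, and the one from Lemma~\ref{lem:decompositionfromx}) is legitimate because both sides lie in $C_0(B)$. Once that is observed, the result is an immediate application of Parseval. So I expect the write-up to be three or four lines, with the ``main obstacle'' being merely to state the extension-by-zero step cleanly so that the $L^2(\pi)$ inner product over all of $S$ can be invoked.
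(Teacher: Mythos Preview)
Your proposal is correct and follows essentially the same approach as the paper: you use Lemma~\ref{lem:decompositionfromx} together with $1_B=\sum_i c_i f_i$ to write $\P_x[T_A\le t]=\sum_i c_i(1-e^{-\lambda_i t})f_i(x)$ on $B$, define the corresponding $g\in C_0(B)$, and then apply orthonormality of $(f_i)$ to read off $\|g\|_2^2$. This is exactly the paper's argument.
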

\begin{proof}
Let $t\geq 0$. By Lemma \ref{lem:decompositionfromx} we have for $x\in B$, using that $\sum_{i=1}^m c_if_i(x) = 1_B(x) = 1$ for the last equality,
\begin{equation}
  g_t(x) := \bbP_x[T_A\leq t] = 1- \P_x[T_{A}>t] = 1 - \sum_{i=1}^m c_if_i(x) e^{-\lambda_i t} = \sum_{i=1}^m c_if_i(x) (1-e^{-\lambda_i t}) \, ,
\end{equation}
that is, $g_t = \sum_{i=1}^m c_i (1-e^{-\lambda_i t})f_i$.
By orthonormality of the family $(f_i)_{1\leq i\leq m}$, we conclude that
\begin{equation}
    \sum_{x\in B} \pi(x) (\bbP_x[T_A\leq t])^2 = \langle g_t, g_t \rangle_\pi = \sum_{i=1}^m c_i^2 (1-e^{-\lambda_i t})^2 \, . \qedhere
\end{equation}
\end{proof}

\subsection{General bound with an auxiliary time}

We introduce the following auxiliary time, which will prove to be very useful: let $\tmed = \tmed(B)$ be the time (which exists and is unique by monotonicity) satisfying
\begin{equation}\label{e: definition of t med}
    \sum_{i=2}^m c_i^2(1-e^{-\lambda_i\tmed})^2= \frac 12 \sum_{i=2}^m c_i^2 \, .
\end{equation}

\begin{prop}[Bound on $R_M-\pi(A)$]
\label{prop:boundusingdsepQSa}
Let $\eset \ne A\subsetneq S$ and set $B = A^c$ (which is not assumed to be $X$-irreducible).
We have
\begin{equation}
\label{e:boundusingdsepQS2}
        R_M-\pi(A) = 2 \sum_{x\in B} \pi(x) \left(   \mathbb{P}_x[T_A \le \tmed ]  \right)^2 -2  (1-R_M)(1-e^{-\lambda_1 \tmed })^{2} \, .
\end{equation}
In particular, we have the following bound
\begin{equation}
\label{e:boundusingdsepQS2ineq}
        R_M-\pi(A) \leq 2 \sum_{x\in B} \pi(x) \left(   \mathbb{P}_x[T_A \le \tmed ]  \right)^2 \, .
\end{equation}
\end{prop}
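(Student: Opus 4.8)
The plan is to expand $\P_\pi[T_A > t]$ in two different ways and compare them at the special time $t = \tmed$. On one hand, by \eqref{e:mixture2} we have
\begin{equation*}
    \P_\pi[T_A > t] = (1-R_M)e^{-\lambda_1 t} + \sum_{i=2}^m c_i^2 e^{-\lambda_i t} \, ,
\end{equation*}
so that, rearranging,
\begin{equation*}
    \sum_{i=2}^m c_i^2 e^{-\lambda_i t} = \P_\pi[T_A > t] - (1-R_M)e^{-\lambda_1 t} \, .
\end{equation*}
On the other hand, Lemma \ref{lem:identityhittingprobabilitiesfullsum} gives $\sum_{x\in B}\pi(x)(\P_x[T_A\le t])^2 = \sum_{i=1}^m c_i^2(1-e^{-\lambda_i t})^2$. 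The idea is to split off the $i=1$ term here as well, writing
\begin{equation*}
    \sum_{x\in B}\pi(x)(\P_x[T_A\le t])^2 = (1-R_M)(1-e^{-\lambda_1 t})^2 + \sum_{i=2}^m c_i^2(1-e^{-\lambda_i t})^2 \, ,
\end{equation*}
using $c_1^2 = 1-R_M$ from Lemma \ref{lem: new identity c 1 square}.

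Now I would expand the squares $(1-e^{-\lambda_i t})^2 = 1 - 2e^{-\lambda_i t} + e^{-2\lambda_i t}$ inside the tail-sum $\sum_{i\ge 2}$, and observe that the quantities $\sum_{i\ge 2}c_i^2$, $\sum_{i\ge 2}c_i^2 e^{-\lambda_i t}$, and $\sum_{i\ge 2}c_i^2 e^{-2\lambda_i t}$ can all be read off from the identity for $\P_\pi[T_A > t]$ above (the last one by doubling $t$) together with $\sum_{i\ge 2}c_i^2 = \pi(B) - (1-R_M) = R_M - \pi(A)$, which comes from \eqref{e:identity sum c i square} and Lemma \ref{lem: new identity c 1 square}. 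This is where the definition of $\tmed$ enters: by \eqref{e: definition of t med}, at $t=\tmed$ we have the exact relation $\sum_{i\ge 2}c_i^2(1-e^{-\lambda_i\tmed})^2 = \tfrac12\sum_{i\ge 2}c_i^2 = \tfrac12(R_M-\pi(A))$. Substituting this into the split form of Lemma \ref{lem:identityhittingprobabilitiesfullsum} at $t=\tmed$ yields
\begin{equation*}
    \sum_{x\in B}\pi(x)(\P_x[T_A\le \tmed])^2 = (1-R_M)(1-e^{-\lambda_1\tmed})^2 + \tfrac12(R_M-\pi(A)) \, ,
\end{equation*}
and solving for $R_M - \pi(A)$ gives exactly \eqref{e:boundusingdsepQS2}. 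The inequality \eqref{e:boundusingdsepQS2ineq} then follows by dropping the manifestly non-negative term $2(1-R_M)(1-e^{-\lambda_1\tmed})^2$.

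The only genuinely delicate point is the consistency check that $\tmed$ is well-defined: one must know $\sum_{i\ge 2}c_i^2 \ge 0$ (equivalently $R_M \ge \pi(A)$, which is precisely the left-hand inequality already recorded in Theorem \ref{thm:ABrefinementintro}) so that the defining equation \eqref{e: definition of t med} has a solution, with existence and uniqueness following from monotonicity of $t\mapsto \sum_{i\ge 2}c_i^2(1-e^{-\lambda_i t})^2$ increasing from $0$ to $\sum_{i\ge 2}c_i^2$ — this is asserted parenthetically in the text and I would simply cite it. Everything else is bookkeeping: the main work is just organising the three exponential moments of the tail spectrum and plugging in $t=\tmed$. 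I do not anticipate any real obstacle beyond keeping the $i=1$ versus $i\ge 2$ split straight and correctly using $c_1^2 = 1-R_M$ in both expansions.
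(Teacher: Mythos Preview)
Your proposal is correct and follows essentially the same route as the paper: apply Lemma~\ref{lem:identityhittingprobabilitiesfullsum} at $t=\tmed$, split off the $i=1$ term via $c_1^2=1-R_M$, replace the tail sum $\sum_{i\ge 2}c_i^2(1-e^{-\lambda_i\tmed})^2$ by $\tfrac12(R_M-\pi(A))$ using the defining relation \eqref{e: definition of t med} together with $\sum_{i\ge 2}c_i^2=R_M-\pi(A)$, and rearrange. The digression about \eqref{e:mixture2} and expanding $(1-e^{-\lambda_i t})^2$ to read off moments at $t$ and $2t$ is unnecessary --- the definition of $\tmed$ handles the tail sum in one stroke, as you yourself note --- but it does no harm.
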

\begin{proof}
By Lemma \ref{lem:identityhittingprobabilitiesfullsum} and the definition of $\tmed$, we have
   \begin{equation}
    \sum_{x\in B} \pi(x) (\bbP_x(T_A\leq \tmed))^2
    = c_1^2(1-e^{-\lambda_1 \tmed})^2 + \frac{1}{2} \sum_{i=2}^m c_i^2 \, .
\end{equation}
Since $c_1^2 = 1-R_M$ and $\sum_{i=2}^m c_i^2 = R_M-\pi(A)$, this can be rewritten as
\begin{equation}
    \sum_{x\in B} \pi(x) (\bbP_x(T_A\leq \tmed))^2 = (1-R_M)(1-e^{-\lambda_1 \tmed})^2 + \frac{1}{2} (R_M-\pi(A)) \, ,
\end{equation}
and reordering the terms gives the desired result.
\end{proof}

\subsection{A general bound via hitting before the relaxation time}

The auxiliary time $\tmed$ does not seem to be easy to estimate. However, we can always bound it by twice the relaxation time, as we now prove.

\begin{lemma}\label{lem:bound tmed trel}
    We have $e^{-\tmed/\trel} \geq 1-1/\sqrt{2}$, and in particular $\tmed \leq 2\trel$.
\end{lemma}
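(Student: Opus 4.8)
The plan is to exploit the defining identity \eqref{e: definition of t med} for $\tmed$ together with the interlacing bound $1-\gamma_2(B)\geq 1/\trel$ from Lemma \ref{lem:interlacing}. First I would observe that for every $i\geq 2$ we have $\lambda_i = 1-\gamma_i \geq 1-\gamma_2 \geq 1/\trel$, so that $e^{-\lambda_i \tmed}\leq e^{-\tmed/\trel}$, and hence $(1-e^{-\lambda_i\tmed})^2 \geq (1-e^{-\tmed/\trel})^2$ for each such $i$ (the function $u\mapsto (1-e^{-u})^2$ being increasing in $u\geq 0$). Summing against the nonnegative weights $c_i^2$ over $i=2,\ldots,m$ gives
\begin{equation}
    \sum_{i=2}^m c_i^2(1-e^{-\lambda_i\tmed})^2 \geq (1-e^{-\tmed/\trel})^2 \sum_{i=2}^m c_i^2 \, .
\end{equation}

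Next I would plug in the definition of $\tmed$, namely that the left-hand side above equals $\tfrac12\sum_{i=2}^m c_i^2$. Provided $\sum_{i=2}^m c_i^2 = R_M - \pi(A) > 0$ (the case $\sum_{i=2}^m c_i^2 = 0$ being degenerate, where $\tmed$ can be taken to be $0$ and the claim is trivial), we may divide through by $\sum_{i=2}^m c_i^2$ to obtain $(1-e^{-\tmed/\trel})^2 \leq \tfrac12$, i.e.\ $1 - e^{-\tmed/\trel} \leq 1/\sqrt{2}$, which rearranges to $e^{-\tmed/\trel}\geq 1-1/\sqrt2$. This is exactly the first claimed inequality.

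Finally, for the second assertion, I would take logarithms: $e^{-\tmed/\trel}\geq 1-1/\sqrt2$ gives $\tmed/\trel \leq -\log(1-1/\sqrt2) = \log\bigl(1/(1-1/\sqrt2)\bigr)$. Since $1-1/\sqrt2 \approx 0.2929$, we have $-\log(1-1/\sqrt2)\approx 1.2277 < 2$, so $\tmed \leq 2\trel$ follows immediately. I do not expect any genuine obstacle here; the only point requiring a word of care is the degenerate case where $R_M = \pi(A)$, in which the sum over $i\geq 2$ vanishes and the time $\tmed$ is not pinned down by \eqref{e: definition of t med} — but then the conclusion holds vacuously (or by convention $\tmed=0$), so the statement is unaffected. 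One could alternatively absorb this by noting that in all cases of interest $\tmed$ is well-defined by the monotonicity remark preceding its definition.
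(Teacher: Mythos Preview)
Your proof is correct and follows essentially the same approach as the paper: both use Lemma \ref{lem:interlacing} to get $\lambda_i \geq 1/\trel$ for $i\geq 2$, deduce $(1-e^{-\lambda_i\tmed})^2 \geq (1-e^{-\tmed/\trel})^2$, and combine this with the defining identity for $\tmed$ to obtain $(1-e^{-\tmed/\trel})^2\leq 1/2$. Your extra remark on the degenerate case $\sum_{i\geq 2}c_i^2=0$ is a nice touch the paper omits.
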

\begin{proof}
    First, for $2\leq i\leq m$ we have $\gamma_i \leq \gamma_2$, so  $\lambda_i \geq  \lambda_2$. Moreover,  we have $\lambda_2 = 1-\gamma_2 \ge 1/\trel$ by Lemma \ref{lem:interlacing}. We deduce that for each $2\leq i\leq m$, we have $\lambda_i \geq 1/\trel$, and therefore that $(1-e^{-\lambda_i\tmed})^2 \geq (1-e^{-\tmed/\trel})^2$. Applying this to each term of the sum in \eqref{e: definition of t med}, we obtain $1/2 \geq  (1-e^{-\tmed/\trel})^2$, which can be rewritten as $e^{-\tmed/\trel} \geq 1-1/\sqrt{2}$. Therefore we have $\tmed \leq -\log(1-1/\sqrt{2})\trel \leq (5/4) \trel$.
\end{proof}

We can now prove Theorem  \ref{thm: general bound bound with relaxation time intro}.

\begin{proof}[Proof of Theorem \ref{thm: general bound bound with relaxation time intro}]
Recall from Remark \ref{rem: unique maximal eigenvalue up to adding laziness locally} that we may assume that $\MaxIrr_X(B) = \ag M \ad$.
    By Lemma \ref{lem:bound tmed trel}, for each $x\in B$ we have $\bbP_x[T_A\leq\tmed] \leq \bbP_x[T_A\leq 2\trel]$. The result then follows from Theorem \ref{thm:ABrefinementintro} and Proposition \ref{prop:boundusingdsepQSa}.
\end{proof}

\section{Specialisation of the results for vertex-transitive graphs}\label{s: specialisation of the results for vertex-transitive graphs}
In Theorem \ref{thm: general bound bound with relaxation time intro}, we proved a practical bound written as a function of hitting times before the relaxation time. The goal of this section is to bound the quantity 
\begin{equation}
    \sum_{x\in S} \pi(x) \left(   \mathbb{P}_x[T_A \le t_2 ]  \right)^2,
\end{equation}
for vertex transitive graphs, where $t_2 := 2\trel$.

\subsection{Reduction of the statement to a singleton}
Let $\Gamma = (V,E)$ be a finite connected vertex-transitive graph. The Markov chain we consider in this section is the (rate-1 continuous time) simple random walk on $\Gamma$. Let $o\in V$ be any vertex. Note that the statespace is now $S=V$ and that the stationary measure is uniform, that is, $\pi(x) = 1/|V|$ for every $x\in V$.
\begin{lemma}\label{lem:bound A square CS}
    Let $A \subset V$ and $t\geq 0$. We have
    \begin{equation}
        \sum_{x\in V}\pi(x)\bbP_x(T_A \leq t)^2 \leq |A|^2  \sum_{x\in V}\pi(x)\bbP_x(T_o\leq t)^2.
    \end{equation}
\end{lemma}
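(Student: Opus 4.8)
The plan is to exploit a union bound over the vertices of $A$ together with the vertex-transitivity of $\Gamma$. First I would write $\bbP_x(T_A \le t) \le \sum_{a\in A}\bbP_x(T_a \le t)$, since hitting $A$ by time $t$ forces hitting at least one of its elements by time $t$. Squaring and applying Cauchy--Schwarz (or the inequality $(\sum_{i=1}^k u_i)^2 \le k\sum_{i=1}^k u_i^2$ with $k=|A|$), I obtain
\begin{equation}
\bbP_x(T_A \le t)^2 \le |A| \sum_{a\in A}\bbP_x(T_a\le t)^2 .
\end{equation}
Summing against $\pi$ then gives $\sum_{x\in V}\pi(x)\bbP_x(T_A\le t)^2 \le |A|\sum_{a\in A}\sum_{x\in V}\pi(x)\bbP_x(T_a\le t)^2$.

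The second step is to observe that, by vertex-transitivity, for each fixed $a\in A$ the quantity $\sum_{x\in V}\pi(x)\bbP_x(T_a\le t)^2$ does not depend on $a$, hence equals $\sum_{x\in V}\pi(x)\bbP_x(T_o\le t)^2$. Indeed, if $\varphi\in\Aut(\Gamma)$ sends $o$ to $a$, then $\bbP_x(T_a\le t) = \bbP_{\varphi^{-1}(x)}(T_o\le t)$ because $\varphi$ maps the walk started at $x$ to a walk started at $\varphi^{-1}(x)$ in law (the transition rates are preserved), and $\pi$ is the uniform measure and hence $\varphi$-invariant; reindexing the sum by $y=\varphi^{-1}(x)$ yields the claimed equality. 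Substituting this into the bound from the first step turns the sum over $a\in A$ into a factor $|A|$, giving
\begin{equation}
\sum_{x\in V}\pi(x)\bbP_x(T_A\le t)^2 \le |A|\cdot|A|\cdot\sum_{x\in V}\pi(x)\bbP_x(T_o\le t)^2 = |A|^2\sum_{x\in V}\pi(x)\bbP_x(T_o\le t)^2,
\end{equation}
which is exactly the assertion.

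There is essentially no hard step here; the only point requiring a small amount of care is the transitivity argument, namely checking that pushing forward the random walk under a graph automorphism preserves its law and that $\pi$ being uniform makes the change of variables in the sum exact. Everything else is a one-line union bound plus Cauchy--Schwarz. (One can also phrase the transitivity step as: the function $x\mapsto \bbP_x(T_o\le t)$ has a sum of squares against $\pi$ that is an isomorphism invariant, and the $|V|$ functions obtained by varying the target vertex are permuted among themselves by $\Aut(\Gamma)$, which acts transitively, so they all have the same $\|\cdot\|_2^2$.)
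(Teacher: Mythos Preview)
Your proposal is correct and follows essentially the same approach as the paper: a union bound over $a\in A$, the Cauchy--Schwarz inequality $(\sum_{a\in A} u_a)^2 \le |A|\sum_{a\in A} u_a^2$, and then vertex-transitivity to replace each target $a$ by the fixed vertex $o$. Your explicit automorphism justification for the transitivity step is slightly more detailed than the paper's one-word ``using transitivity'', but the argument is otherwise identical.
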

\begin{proof}
    For $x\in V$, by the Cauchy--Schwarz inequality we have
    \begin{equation}
        \bbP_x(T_A \leq t)^2\leq \pg \sum_{a\in A} \bbP_x(T_a \leq t)\pd^2 \leq  |A|\sum_{a\in A} \bbP_x(T_a \leq t)^2.
    \end{equation}
Permuting the sums and using transitivity, we obtain
    \begin{equation}\begin{split}
         \sum_{x\in V}\pi(x)\bbP_x(T_A \leq t)^2   \leq |A| \sum_{a\in A} \sum_{x\in V}\pi(x) \bbP_x(T_a \leq t)^2 & = |A| \sum_{a\in A} \sum_{x\in V}\pi(x) \bbP_x(T_o \leq t)^2 \\
         & = |A|^2 \sum_{x\in V}\pi(x) \bbP_x(T_o \leq t)^2,
    \end{split}
    \end{equation}
as desired.
\end{proof}

\subsection{Killing the chain at an exponential time}
Our goal is now to bound $\sum_{x\in V}\pi(x) \bbP_x(T_o \leq t_2)^2$, where we recall that $t_2 = 2\trel$.
For $t\geq 0$, denote the local time at $o$ by time $t$ by $L_o(t)= \int_{0}^t 1_{X_s = o} \mathrm{d}s$. For $x,y\in V$ and $t\geq 0$, set $p_t(x,y):= \bbP_x(X_t = y)$. For any $x\in V$ and $t\geq 0$ we therefore have 
\begin{equation}\label{eq: expected local times as integrals}
  \bbE_x L_o(t) = \int_0^t p_s(x,o) \mathrm{d}s.  
\end{equation} 
Moreover for any $x\in V$ and $t\geq 0$ we have 
\begin{equation}
   \bbP_x[T_o \leq t] = \frac{\bbE_x[L_o(t)]}{\bbE_x [L_o(t)\mid L_o(t)>0]}. 
\end{equation}
Upper bounding the numerator can be done using the integral form \eqref{eq: expected local times as integrals}. For the denominator, we would like to lower bound $\bbE_x [L_o(t)\mid L_o(t)>0]$ by $\bbE_o [L_o(t/2)]$, perhaps up to a constant, and then use the integral form also for the denominator. However such a bound does not hold in general. (For example in discrete time, on a segment of length $n$, at time $t=n$, we have $\bbE_n [L_0(n)\mid L_0(n)>0] = 1$, while $\bbE_0 [L_0(n/2)] \asymp \sqrt{n}$.)

\medskip

To go around this, we consider a version of the chain that is killed at rate $1/t_2$. Let $\tau$ be an exponential variable of parameter $1/t_2$. Observe that since $\bbP(\tau \geq t_2) =e^{-1}$, we have
\begin{equation}\label{e:bound when t 2 is replaced by tau}
     \sum_{x\in V}\pi(x)\bbP_x(T_A \leq t_2)^2 \leq e^2 \sum_{x\in V}\pi(x)\bbP_x(T_A \leq \tau)^2. 
\end{equation}
Replacing $t_2$ by $\tau$, we therefore only lose a multiplicative constant.
Let $N := L_o(\tau)$ be the local time at $o$ before the chain is killed. For $x,y\in V$ and $s\geq 0$ we have $\mathbb{P}_x[X_s=y,\tau>s]=p_s(x,y)e^{-\frac{s}{t_{1}}}$, and hence for $x\in V$,
\begin{equation}
    \mathrm{E}_x[N]=\int_0^{\infty}p_s(x,o)e^{-\frac{s}{t_{1}}}\mathrm{d}s.
\end{equation}
\begin{lemma}\label{lem:memoryless N identity} For any $x\in V$, we have
 \begin{equation}
    \mathbb{P}_x[T_o \le \tau] = \frac{\int_0^{\infty}p_s(x,o)e^{-s/t_2}\mathrm{d}s}{\int_0^{\infty}p_s(o,o)e^{-s/t_2} \mathrm{d}s}.
\end{equation}
\end{lemma}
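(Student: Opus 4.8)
The plan is to use the memoryless property of the exponential killing time $\tau$ together with the strong Markov property at the first hitting time $T_o$. The key point is that, conditionally on $\{T_o \le \tau\}$, the residual killing time $\tau - T_o$ is again exponential with parameter $1/t_2$ and independent of the past, so the local time accumulated at $o$ after time $T_o$ has the same law as $N = L_o(\tau)$ under $\P_o$. The plan is therefore as follows.

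First I would write $\mathrm{E}_x[N]$ by splitting on the event $\{T_o \le \tau\}$ (note $N = 0$ on the complement, since the walk never reaches $o$ before being killed). On $\{T_o \le \tau\}$, by the strong Markov property at $T_o$ and the memorylessness of $\tau$, the post-$T_o$ local time at $o$ is distributed as $N$ under $\P_o$, and is independent of $\{T_o \le \tau\}$. Hence
\begin{equation}
    \mathrm{E}_x[N] = \mathbb{P}_x[T_o \le \tau] \cdot \mathrm{E}_o[N].
\end{equation}
Next I would substitute the integral formulas $\mathrm{E}_x[N] = \int_0^\infty p_s(x,o)e^{-s/t_2}\mathrm{d}s$ and $\mathrm{E}_o[N] = \int_0^\infty p_s(o,o)e^{-s/t_2}\mathrm{d}s$ (both already derived in the excerpt, the latter being the case $x = o$), observe that $\mathrm{E}_o[N] > 0$ since $p_0(o,o) = 1$ and $p_s(o,o)$ is continuous, and divide to obtain the claimed identity.

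The main obstacle — really the only subtlety — is justifying the factorisation $\mathrm{E}_x[N] = \mathbb{P}_x[T_o \le \tau]\,\mathrm{E}_o[N]$ cleanly. Concretely, one writes $N = \int_0^\tau \mathbf{1}\{X_s = o\}\mathrm{d}s = \mathbf{1}\{T_o \le \tau\}\int_{T_o}^\tau \mathbf{1}\{X_s = o\}\mathrm{d}s$, and applies the strong Markov property at the stopping time $T_o$ together with the fact that, given $\{T_o \le \tau\}$ and $\mathcal{F}_{T_o}$, the overshoot $\tau - T_o$ is $\mathrm{Exp}(1/t_2)$ independent of the future increments — this is where the choice of an exponential (rather than deterministic $t_2$) killing time pays off, exactly as motivated in the paragraph preceding the lemma. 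One should take a small moment to note that $\{T_o \le \tau\}$ is $\mathcal{F}_{T_o}$-measurable (it equals $\{T_o < \infty\} \cap \{\tau \ge T_o\}$ and $\tau$ can be realised as an independent clock), so conditioning is legitimate. Everything else is a routine computation with the two integral identities.
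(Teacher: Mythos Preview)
Your proposal is correct and follows essentially the same approach as the paper's proof: both identify $\{T_o \le \tau\}$ with $\{N>0\}$, use the memoryless property of $\tau$ together with the strong Markov property at $T_o$ to obtain $\mathrm{E}_x[N \mid N>0]=\mathrm{E}_o[N]$ (equivalently, your factorisation $\mathrm{E}_x[N]=\mathbb{P}_x[T_o\le\tau]\,\mathrm{E}_o[N]$), and then substitute the integral expressions for $\mathrm{E}_x[N]$ and $\mathrm{E}_o[N]$. Your write-up is somewhat more explicit about the measurability and conditioning details, but the argument is the same.
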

\begin{proof}
Let $x\in V$. First, we have
\begin{equation}
    \mathbb{P}_x[T_o \le \tau] =  \bbP_x[N \mid N>0]=\frac{\mathrm{E}_x[N]}{\mathrm{E}_x[N \mid N>0]} \, .
\end{equation}
  Moreover, by the memoryless property of the exponential distribution, we have $\mathrm{E}_x[N \mid N>0]=\mathrm{E}_{o}[N]$. 
We conclude that
\begin{equation}
     \mathbb{P}_x[T_o \le \tau]=\frac{\mathrm{E}_x[N]}{\mathrm{E}_o[N]} = \frac{\int_0^{\infty}p_s(x,o)e^{-s/t_2}\mathrm{d}s}{\int_0^{\infty}p_s(o,o)e^{-s/t_2} \mathrm{d}s} \, . \qedhere
\end{equation}
\end{proof}
For $j\geq 0$ we set 
\begin{equation}
    I_j = \int_0^{\infty}s^jp_s(x,o)e^{-s/t_2}\mathrm{d}s \, .
\end{equation}

\begin{lemma}\label{lem: rewritting the sum of hitting times squared as a fraction of integrals}
We have
\begin{equation}
    \sum_{x\in V}\pi(x) (\mathbb{P}_x[T_o \le \tau])^2 = \frac{1}{|V|} \frac{I_1}{I_0^2} \, .
\end{equation}
\end{lemma}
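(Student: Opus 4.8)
\textbf{Plan for the proof of Lemma \ref{lem: rewritting the sum of hitting times squared as a fraction of integrals}.}
The idea is to substitute the identity from Lemma \ref{lem:memoryless N identity} directly and exploit vertex-transitivity to turn a sum over $x\in V$ into a single integral. First I would recall that $\pi(x) = 1/|V|$ for every $x$ since the graph is vertex-transitive, and use Lemma \ref{lem:memoryless N identity} to write, for each $x\in V$,
\begin{equation}
    \mathbb{P}_x[T_o\le\tau] = \frac{\int_0^\infty p_s(x,o)e^{-s/t_2}\,\mathrm{d}s}{\int_0^\infty p_s(o,o)e^{-s/t_2}\,\mathrm{d}s}.
\end{equation}
Note that the denominator $D:=\int_0^\infty p_s(o,o)e^{-s/t_2}\,\mathrm{d}s$ does not depend on $x$, so
\begin{equation}
    \sum_{x\in V}\pi(x)(\mathbb{P}_x[T_o\le\tau])^2 = \frac{1}{|V|}\frac{1}{D^2}\sum_{x\in V}\left(\int_0^\infty p_s(x,o)e^{-s/t_2}\,\mathrm{d}s\right)^2.
\end{equation}

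The key step is to rewrite the inner sum of squares. Writing the square as a double integral over $(s,u)\in[0,\infty)^2$ and summing over $x$ first gives
\begin{equation}
    \sum_{x\in V}\left(\int_0^\infty p_s(x,o)e^{-s/t_2}\,\mathrm{d}s\right)^2 = \int_0^\infty\!\!\int_0^\infty e^{-(s+u)/t_2}\sum_{x\in V}p_s(x,o)p_u(x,o)\,\mathrm{d}s\,\mathrm{d}u,
\end{equation}
where the interchange of sum and integral is justified since everything is nonnegative (Tonelli). Now I would use reversibility and transitivity: since $\pi$ is uniform, $p_s(x,o)=p_s(o,x)$, so by Chapman--Kolmogorov $\sum_{x\in V}p_s(x,o)p_u(x,o) = \sum_{x\in V}p_s(o,x)p_u(x,o) = p_{s+u}(o,o)$. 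Substituting this and changing variables via $r=s+u$ (the length of the diagonal slice $\{s+u=r\}$ in the $(s,u)$-plane contributes a factor $r$, i.e.\ $\int_0^\infty\int_0^\infty g(s+u)\,\mathrm{d}s\,\mathrm{d}u = \int_0^\infty r\,g(r)\,\mathrm{d}r$) gives
\begin{equation}
    \sum_{x\in V}\left(\int_0^\infty p_s(x,o)e^{-s/t_2}\,\mathrm{d}s\right)^2 = \int_0^\infty r\, p_r(o,o)e^{-r/t_2}\,\mathrm{d}r = I_1,
\end{equation}
using the definition of $I_j$ with the understanding that $p_r(x,o)=p_r(o,o)$ (by transitivity $I_j$ does not actually depend on $x$; indeed $I_0 = D$).

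Finally, combining the last two displays, and observing that $I_0 = \int_0^\infty p_s(o,o)e^{-s/t_2}\,\mathrm{d}s = D$ (again by transitivity, $p_s(x,o)=p_s(o,o)$ in the definition of $I_0$... more precisely the integrals $I_j$ are evaluated at the diagonal), we get
\begin{equation}
    \sum_{x\in V}\pi(x)(\mathbb{P}_x[T_o\le\tau])^2 = \frac{1}{|V|}\frac{I_1}{I_0^2},
\end{equation}
as claimed. The main obstacle is purely bookkeeping: making sure the definition of $I_j$ is read consistently (the relevant integrand is the return probability $p_r(o,o)$, which by transitivity coincides with $p_r(x,o)$ only in the sense needed), justifying the Tonelli interchange, and correctly computing the Jacobian factor $r$ arising from the change of variables $(s,u)\mapsto (r,\cdot)$ on the simplex $\{s+u=r\}$. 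There is no analytic difficulty; the content is the Chapman--Kolmogorov collapse $\sum_x p_s(x,o)p_u(x,o)=p_{s+u}(o,o)$ enabled by reversibility plus the uniform stationary measure.
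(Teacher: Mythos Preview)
Your argument is correct and follows exactly the same route as the paper: apply Lemma~\ref{lem:memoryless N identity}, expand the square as a double integral, use reversibility and Chapman--Kolmogorov to collapse $\sum_x p_s(x,o)p_u(x,o)$ to $p_{s+u}(o,o)$, and then change variables to pick up the factor $r$.

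One point to clean up: your parenthetical claim that ``by transitivity $p_r(x,o)=p_r(o,o)$'' and that ``$I_j$ does not actually depend on $x$'' is false as stated. Vertex-transitivity gives $p_r(x,x)=p_r(o,o)$ for every $x$, but it certainly does \emph{not} give $p_r(x,o)=p_r(o,o)$ for $x\neq o$ (consider any nontrivial example). What is actually going on is that the paper's displayed definition of $I_j$ contains a typo: it should read $p_s(o,o)$ rather than $p_s(x,o)$, as is clear from how $I_j$ is used both in this lemma and in Lemma~\ref{lem:integrals rewritten with beta i}. Your computation lands on $\int_0^\infty r\,p_r(o,o)e^{-r/t_2}\,\mathrm{d}r$, which is the intended $I_1$, and your $D$ is the intended $I_0$; so the proof is fine once you drop the incorrect transitivity justification and simply note that $I_j$ is meant to be the diagonal integral.
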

\begin{proof}
     Since $\pi(x) = 1/|V|$ for every $x\in V$, by Lemma \ref{lem:memoryless N identity} we have
\begin{equation}
    |V| \pg \int_0^{\infty}p_s(x,o)e^{-s/t_2}\mathrm{d}s \pd^2\sum_{x\in V} \pi(x)(\mathbb{P}_x[T_o \le \tau])^2 = \sum_{x\in V} \pg \int_0^{\infty}p_s(x,o)e^{-s/t_2}\mathrm{d}s\pd^2.
\end{equation}
Moreover by reversibility we have $p_s(o,x) = p_s(x,o)$ for every $s\geq 0$ and $x\in V$. Therefore, permuting the sum and the integrals, we obtain
\begin{equation}
\begin{split}
     \sum_{x\in V} \pg \int_0^{\infty}p_s(x,o)e^{-s/t_2}\mathrm{d}s\pd^2 & = \sum_{x\in V} \int_0^{\infty}p_{s_1}(o,x)e^{-{s_1}/t_2}\mathrm{d}{s_1} \int_0^{\infty}p_{s_2}(x,o)e^{-{s_2}/t_2}\mathrm{d}{s_2} \\
    & =  \int_0^{\infty}\int_0^{\infty} e^{-{(s_1+s_2)}/t_2} \pg \sum_{x\in V} p_{s_1}(o,x)p_{s_2}(x,o) \pd \mathrm{d}{s_2}\mathrm{d}{s_1} \\
    & = \int_0^{\infty}\int_0^{\infty} e^{-{(s_1+s_2)}/t_2} p_{s_1+s_2}(o,o) \mathrm{d}{s_2}\mathrm{d}{s_1} \, .
\end{split} 
\end{equation}
Finally, doing the change of variables $s = s_1 + s_2$ and permuting integrals, we obtain
\begin{equation}
\begin{split}
     \sum_{x\in V} \pg \int_0^{\infty}p_s(x,o)e^{-s/t_2}\mathrm{d}s\pd^2 & = \int_0^{\infty}\int_0^{\infty} e^{-{s}/t_2} p_{s}(o,o) 1_{s\geq s_1}\mathrm{d}{s}\mathrm{d}{s_1} \\
     & = \int_{0}^\infty s\cdot p_s(o,o) e^{-s/t_2}\mathrm{d}s \, ,
\end{split}
\end{equation}
which concludes the proof.
\end{proof}

\subsection{Spectral decomposition of the killed chain}
Set $n = |V|$. Denote the transition matrix of the chain by $P$. Let $0 = \beta_1 < \beta_2 \leq \ldots \leq \beta_n$ be the eigenvalues of $I-P=: -Q$, and let $g_1,\ldots, g_n$ be an associated orthonormal basis of $\mathbb{R}^V$ with respect to $\langle \cdot,\cdot \rangle_{\pi}$. Then for $s\geq 0$ and $1\leq i\leq n$ we have $e^{sQ}g_i = e^{-s\beta_i} g_i$.
\begin{lemma}\label{lem:integrals rewritten with beta i}
    Recall that $n = |V|$, and let $j\geq 0$ be an integer. We have
    \begin{equation}
\label{e:stop at beta1 new}
\frac{n}{j!} I_j = t_2^{j+1}+ \sum_{i=2}^n (\beta_i + 1/t_2)^{-(j+1)} \, .
\end{equation}
\end{lemma}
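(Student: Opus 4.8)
The plan is to expand $I_j$ using the spectral decomposition of the killed generator $Q$ and then integrate term by term. First I would write $p_s(o,o) = \langle e^{sQ}1_o, 1_o/\pi(o)\rangle_\pi$ and expand $1_o = \sum_{i=1}^n \langle 1_o, g_i\rangle_\pi g_i$ in the orthonormal basis $(g_i)$. Using $\langle 1_o, g_i\rangle_\pi = \pi(o) g_i(o)$ and $\pi(o) = 1/n$, this gives $p_s(o,o) = \sum_{i=1}^n g_i(o)^2 e^{-s\beta_i}$. The key point is that by vertex-transitivity $g_i(o)^2$ averages nicely: since $\sum_{x\in V}\pi(x) g_i(x)^2 = \|g_i\|_2^2 = 1$, and $o$ is an arbitrary vertex, we actually have (again using that the statement is being evaluated at a fixed $o$, and more carefully using that $p_s(o,o)$ does not depend on $o$ by transitivity) that averaging over $o$ yields $p_s(o,o) = \frac{1}{n}\sum_{i=1}^n e^{-s\beta_i}$, where the $i=1$ term corresponds to $\beta_1 = 0$, i.e.\ $g_1 \equiv 1$.

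Next I would substitute this into $I_j = \int_0^\infty s^j p_s(o,o) e^{-s/t_2}\,\mathrm{d}s$, interchange sum and integral (justified since everything is nonnegative and finite), to get
\begin{equation}
    I_j = \frac{1}{n}\sum_{i=1}^n \int_0^\infty s^j e^{-s(\beta_i + 1/t_2)}\,\mathrm{d}s = \frac{1}{n}\sum_{i=1}^n \frac{j!}{(\beta_i + 1/t_2)^{j+1}} \, ,
\end{equation}
using the standard Gamma integral $\int_0^\infty s^j e^{-cs}\,\mathrm{d}s = j!/c^{j+1}$ for $c>0$. Isolating the $i=1$ term, where $\beta_1 = 0$ so that $(\beta_1 + 1/t_2)^{-(j+1)} = t_2^{j+1}$, gives
\begin{equation}
    \frac{n}{j!} I_j = t_2^{j+1} + \sum_{i=2}^n (\beta_i + 1/t_2)^{-(j+1)} \, ,
\end{equation}
which is exactly \eqref{e:stop at beta1 new}.

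The only subtle point — and the step I would be most careful about — is the claim that $p_s(o,o) = \frac{1}{n}\sum_i e^{-s\beta_i}$, i.e.\ that $g_i(o)^2 = 1/n$ on average in a way that can be pushed inside. The clean justification is: $\Tr(e^{sQ}) = \sum_{i=1}^n e^{-s\beta_i}$ on one hand, and on the other hand $\Tr(e^{sQ}) = \sum_{x\in V} [e^{sQ}](x,x) = \sum_{x\in V} p_s(x,x) = n\, p_s(o,o)$ by vertex-transitivity (all diagonal return probabilities are equal). Equating the two expressions gives $p_s(o,o) = \frac{1}{n}\sum_{i=1}^n e^{-s\beta_i}$ directly, avoiding any delicate manipulation of individual eigenvectors. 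With this identity in hand the rest is the routine Gamma-integral computation above. (This same trace identity is presumably what is used elsewhere in the paper; if stated earlier one can simply cite it.)
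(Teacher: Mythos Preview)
Your proof is correct and essentially the same as the paper's: both use vertex-transitivity together with the trace identity $\sum_{x}p_s(x,x)=\Tr(e^{sQ})=\sum_{i}e^{-s\beta_i}$ to obtain $p_s(o,o)=\tfrac{1}{n}\sum_i e^{-s\beta_i}$, and then integrate term by term via the Gamma integral $\int_0^\infty s^j e^{-cs}\,\mathrm{d}s=j!/c^{j+1}$. The only cosmetic difference is that the paper subtracts the stationary contribution $\pi(o)$ first and works with $\sum_{i\ge 2}e^{-\beta_i s}$, whereas you keep all terms and split off $i=1$ at the end.
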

\begin{proof}
    For $\theta>0$ we have $\int_0^\infty s^j e^{-s/\theta}\mathrm{d}s = j!\theta^{j+1}$. Applying this with $\theta = t_2$ and using transitivity, we get
    \begin{equation}
       nI_j - j!t_{2}^{j+1}
         = \int_0^{\infty} ns^j \left( p_s(o,o)-\pi(o)\right) e^{-\frac{s}{t_{2}}}\mathrm{d}s
         = \int_0^{\infty} s^j \sum_{x\in V} \left( p_s(x,x)-\pi(x)\right) e^{-\frac{s}{t_{2}}}\mathrm{d}s \, .
    \end{equation}
Moreover, for each $s\geq 0$, we have
\begin{equation}\label{e: identity heat kernel eigenvalues}
     \sum_{x\in V} \left( p_s(x,x)-\pi(x)\right) = \Tr \pg e^{Qs}-I/n\pd = \pg \Tr e^{Qs}\pd-1 = \pg \sum_{i=1}^n e^{-\beta_i s}\pd -1 = \sum_{i=2}^n e^{-\beta_i s} \, .
\end{equation}
Therefore, 
\begin{equation}
   nI_j - j!t_{2}^{j+1} = \sum_{i=2}^n\int_0^{\infty}  s^je^{-s \pg \beta_i +  \frac{1}{t_{2}}\pd}\mathrm{d}s = k!\sum_{i=2}^n (\beta_i + 1/t_2)^{-(k+1)} \, ,
\end{equation}
which concludes the proof. 
\end{proof}

\subsection{Bounds on the integrals}
We saw in Lemma \ref{lem: rewritting the sum of hitting times squared as a fraction of integrals} that $ \sum_{x\in V}\pi(x) (\mathbb{P}_x[T_o \le \tau])^2 = \frac{1}{|V|} \frac{I_1}{I_0^2}$, and rewrote $I_0$ and $I_1$ differently in Lemma \ref{lem:integrals rewritten with beta i}. We now want to find an upper bound on $I_1$ and a lower bound on $I_0$.

Let us start with $I_0$.

\begin{lemma}\label{lem:lower bound on I 0}
    We have
    \begin{equation}
        I_0 \geq \frac{2}{3}\frac{\bbE_\pi[T_o]}{n} \, .
    \end{equation}
\end{lemma}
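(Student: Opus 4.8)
The plan is to read off $nI_0$ from the spectral identity of Lemma~\ref{lem:integrals rewritten with beta i}, express $\bbE_\pi[T_o]$ through the same eigenvalues $0=\beta_1<\beta_2\le\cdots\le\beta_n$ of $I-P$, and then compare the two expressions term by term. Concretely, applying Lemma~\ref{lem:integrals rewritten with beta i} with $j=0$ gives
\begin{equation*}
    nI_0 = t_2 + \sum_{i=2}^n \frac{1}{\beta_i + 1/t_2}\, .
\end{equation*}
On the other hand, I would record the standard identity $\bbE_\pi[T_o] = n\int_0^\infty\big(p_s(o,o)-\tfrac1n\big)\,\mathrm{d}s$ (the diagonal entry of the fundamental matrix divided by $\pi(o)=1/n$; see e.g.\ \cite{AldousFill}), and combine it with \eqref{e: identity heat kernel eigenvalues} together with transitivity (so that $p_s(x,x)=p_s(o,o)$ for all $x$, hence $n(p_s(o,o)-\tfrac1n)=\sum_{i=2}^n e^{-\beta_i s}$). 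Integrating over $s$ then yields the clean formula
\begin{equation*}
    \bbE_\pi[T_o] = \sum_{i=2}^n \frac{1}{\beta_i}\, .
\end{equation*}

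With these two identities in hand, the proof reduces to a termwise inequality. Since $t_2=2\trel$ and $\beta_2 = 1/\trel$, we have $1/t_2 = \beta_2/2 \le \beta_i/2$ for every $i\ge 2$, so $\beta_i + 1/t_2 \le \tfrac32\beta_i$ and therefore $\dfrac{1}{\beta_i+1/t_2}\ge \dfrac{2}{3\beta_i}$. Dropping the nonnegative term $t_2$ and summing over $i\ge 2$ gives
\begin{equation*}
    nI_0 \ge \sum_{i=2}^n \frac{1}{\beta_i+1/t_2} \ge \frac23\sum_{i=2}^n\frac1{\beta_i} = \frac23\,\bbE_\pi[T_o]\, ,
\end{equation*}
and dividing by $n$ is exactly the claim.

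I do not expect a genuine obstacle: the one point that deserves care is justifying $\bbE_\pi[T_o]=\sum_{i\ge2}\beta_i^{-1}$ cleanly (equivalently, the integral representation of $\bbE_\pi[T_o]$ via the Green's function), and making sure the normalisations $t_2=2\trel$ and $\beta_2=\trel^{-1}$ are plugged in so that the factor $2/3$ comes out — any other constant $c$ with $1/t_2\le (c^{-1}-1)\beta_2$ would work, and $t_2=2\trel$ is precisely calibrated to give $c=2/3$. Everything else is bookkeeping with the spectral sums already set up in Lemma~\ref{lem:integrals rewritten with beta i}.
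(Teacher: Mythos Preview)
Your proposal is correct and follows essentially the same route as the paper: apply Lemma~\ref{lem:integrals rewritten with beta i} with $j=0$, use $1/t_2=\beta_2/2\le\beta_i/2$ to get the termwise bound $(\beta_i+1/t_2)^{-1}\ge\tfrac{2}{3}\beta_i^{-1}$, drop the $t_2$ term, and identify $\sum_{i\ge 2}\beta_i^{-1}$ with $\bbE_\pi[T_o]$. The only cosmetic difference is that the paper obtains $\sum_{i\ge 2}\beta_i^{-1}=\bbE_\pi[T_o]$ by citing the eigentime identity \cite[Proposition 3.13]{AldousFill} together with transitivity, whereas you derive it via the Green's-function integral and \eqref{e: identity heat kernel eigenvalues}; these are equivalent justifications of the same identity.
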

\begin{proof}
    Recall that $2/t_2 = 1/\trel = \beta_2\leq \beta_3\leq \ldots \leq \beta_n$. It follows that for each $2\leq i \leq n$ we have $\beta_i + 1/t_2 \leq \frac{3}{2}\beta_i$, and therefore by Lemma \ref{lem:integrals rewritten with beta i} we have
    \begin{equation}\label{e:lower bound on I 0 eq inter}
       nI_0 \geq  nI_0 - t_2 \geq \frac{2}{3}\sum_{i=2}^n \frac{1}{\beta_i} \, .
    \end{equation}
Finally, by the eigentime identity (\cite[Proposition 3.13]{AldousFill}) and transitivity, we have
\begin{equation}
    \sum_{i=2}^n \beta_i^{-1}=\sum_{x}\pi(x)\mathbb{E}_{\pi}[T_x] = \mathbb{E}_{\pi}[T_o].
\end{equation}
Plugging this into \eqref{e:lower bound on I 0 eq inter} concludes the proof.
\end{proof}

Let us now bound $I_1$.
Recall that $D$ denotes the diameter of $\Gamma$ and $d$ is its degree. For $\rho>0$, denote the number of vertices in a ball of radius $\lfloor \rho \rfloor $ by $V(\rho)$. 
\begin{lemma}\label{lem:upper bound on I 1}
    We have
    \begin{equation}
        I_1 \leq 64 \pg  \frac{\trel^2}{n} + d^2\int_{0}^{\sqrt{\trel/d}}\frac{s^{3} \mathrm{d}s}{V(s)}   \pd.
    \end{equation}
\end{lemma}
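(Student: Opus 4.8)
The plan is to start from the identity $n I_1 = 2t_2^3 + 2\sum_{i\ge 2}(\beta_i+1/t_2)^{-3}$ proved in Lemma \ref{lem:integrals rewritten with beta i} with $j=1$ (the first term gives $2\trel^3 \asymp \trel^2 \cdot \trel/n \le \trel^2$ after dividing by $n$, wait — actually $2t_2^3/n = 16\trel^3/n$, which is not obviously $\le 64\trel^2/n$; so more care is needed: one should bound $t_2^3 = 8\trel^3$ and note $\trel \le $ something, or rather keep it as $\trel^2/n$ times $\trel$, which is too big). Let me reconsider: the cleaner route is to go back to the integral $I_1 = \int_0^\infty s\, p_s(o,o) e^{-s/t_2}\,\mathrm ds$ directly and split the range of integration at $s_0 := \trel/d$ (or a comparable threshold). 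The plan is as follows.

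\textbf{Step 1 (large times).} On the range $s \ge s_0$, bound $p_s(o,o) \le p_{s_0}(o,o) + \pi(o)$ is not quite monotone, but $p_s(o,o)$ is non-increasing in $s$ for reversible chains by spectral decomposition ($p_s(o,o) = \sum_i g_i(o)^2 e^{-\beta_i s}$, a sum of non-increasing functions), so $p_s(o,o) \le p_{s_0}(o,o)$; then a standard on-diagonal heat-kernel bound for vertex-transitive graphs gives $p_{s_0}(o,o) \le C/V(\sqrt{s_0})$ up to additive $\pi(o) = 1/n$. Thus $\int_{s_0}^\infty s\, p_s(o,o) e^{-s/t_2}\,\mathrm ds \le \big(\tfrac{1}{n} + \tfrac{C}{V(\sqrt{s_0})}\big)\int_0^\infty s\, e^{-s/t_2}\,\mathrm ds = \big(\tfrac{1}{n} + \tfrac{C}{V(\sqrt{s_0})}\big)\,t_2^2$. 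Since $s_0 = \trel/d$ and $t_2 = 2\trel$, this contributes $\lesssim \trel^2/n + \trel^2/V(\sqrt{\trel/d})$; one then checks that $\trel^2/V(\sqrt{\trel/d})$ is dominated by the integral term $d^2\int_0^{\sqrt{\trel/d}} s^3\,\mathrm ds/V(s)$ (because $V$ is non-decreasing, $\int_0^{\sqrt{\trel/d}} s^3\,\mathrm ds/V(s) \ge \frac{1}{V(\sqrt{\trel/d})}\int_{\sqrt{\trel/2d}}^{\sqrt{\trel/d}} s^3\,\mathrm ds \gtrsim \frac{(\trel/d)^2}{V(\sqrt{\trel/d})}$, so $d^2$ times this is $\gtrsim \trel^2/V(\sqrt{\trel/d})$).

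\textbf{Step 2 (small times).} On $s \le s_0$, drop the factor $e^{-s/t_2} \le 1$ and use the on-diagonal bound $p_s(o,o) \le C/V(\sqrt{s})$ (same vertex-transitive heat-kernel estimate, valid for all $s>0$), so $\int_0^{s_0} s\, p_s(o,o)\,\mathrm ds \le C\int_0^{s_0} \frac{s\,\mathrm ds}{V(\sqrt{s})}$; the substitution $s = u^2$ turns this into $2C\int_0^{\sqrt{s_0}} \frac{u^3\,\mathrm du}{V(u)}$, which is $2C\int_0^{\sqrt{\trel/d}} \frac{u^3\,\mathrm du}{V(u)}$. Multiplying by the $d^2$ coming from comparing with how the heat kernel constant interacts — actually here the factor $d^2$ must already be part of the heat-kernel bound I invoke or of how I relate $V(\sqrt s)$ to the continuous-time walk; I would be careful to state the on-diagonal bound in the exact form $p_s(o,o) \le C\min(1, \cdot)$ that produces the displayed constant $64$ and the $d^2$. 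Combining Steps 1 and 2 and absorbing constants gives $I_1 \le 64\big(\trel^2/n + d^2\int_0^{\sqrt{\trel/d}} s^3\,\mathrm ds/V(s)\big)$.

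\textbf{Main obstacle.} The delicate point is the on-diagonal heat-kernel estimate $p_s(o,o) \lesssim 1/V(\sqrt s)$ for continuous-time simple random walk on a vertex-transitive graph, together with getting the dependence on the degree $d$ right (this is presumably where the factor $d^2$ and the threshold $\sqrt{\trel/d}$ both enter, via a Nash-type or Carne–Varopoulos-type inequality, or via comparison between continuous time at rate $1$ and the natural time-scaling in which a step moves distance $\asymp 1$). Everything else is bookkeeping: monotonicity of $s\mapsto p_s(o,o)$, the exact Gaussian integral $\int_0^\infty s e^{-s/t_2}\,\mathrm ds = t_2^2$, the substitution $s=u^2$, and the elementary comparison showing the boundary term $\trel^2/V(\sqrt{\trel/d})$ is subsumed by the integral term. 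I would therefore organize the proof around invoking the vertex-transitive on-diagonal bound as a black box (citing the relevant reference), and then splitting the integral at $s_0 = \trel/d$ exactly as above.
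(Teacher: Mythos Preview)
You misread Lemma \ref{lem:integrals rewritten with beta i}: for $j=1$ it gives
\[
nI_1 = t_2^{2} + \sum_{i \ge 2}(\beta_i + 1/t_2)^{-2},
\]
not the cubic expression you wrote (that would be $j=2$). The first term is $t_2^2/n = 4\trel^2/n$, which already sits under $64\trel^2/n$, so there was no reason to abandon this route---and this is exactly the paper's starting point. From there the paper bounds $\frac{1}{n}\sum_{i\ge2}(\beta_i+1/t_2)^{-2} \le \frac{1}{n}\sum_{i\ge2}\beta_i^{-2}$, rewrites the latter via a layer-cake formula as $\int_{\beta_2}^\infty \frac{2}{r^3}\nu([0,r])\,\mathrm{d}r$ with $\nu = \frac1n\sum_{i\ge2}\delta_{\beta_i}$, invokes the eigenvalue-counting inequality $\nu([0,r]) \le 4/V\bigl(\sqrt{(2dr)^{-1}}\bigr)$ of Lyons--Oveis Gharan, and substitutes $s = \sqrt{(2dr)^{-1}}$ to produce exactly $64\, d^2\int_0^{\sqrt{\trel/d}} s^3\,\mathrm{d}s/V(s)$.

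Your alternative plan---split $\int_0^\infty s\,p_s(o,o)e^{-s/t_2}\,\mathrm{d}s$ at some $s_0$ and feed in an on-diagonal heat-kernel bound---is a reasonable parallel strategy, but the gap is precisely where you flag it. The estimate $p_s(o,o) \lesssim 1/V(\sqrt{s})$ is false without a degree correction: on the complete graph $K_n$ at $s=1$ one has $p_1(o,o) \approx e^{-1}$ while $V(1)=n$. The correct form is $p_s(o,o) - 1/n \lesssim 1/V\bigl(c\sqrt{s/d}\bigr)$, and once that $d$ is carried through, the natural substitution becomes $u=\sqrt{s/d}$ and the natural threshold becomes $s_0 \asymp \trel$, not $\trel/d$; your choices (substitution $s=u^2$, threshold $\trel/d$) are calibrated to the uncorrected bound and no longer yield the stated limit $\sqrt{\trel/d}$ and prefactor $d^2$ simultaneously. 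The spectral route sidesteps all of this because the Lyons--Oveis Gharan bound already packages the degree dependence in closed form; your heat-kernel approach would essentially have to rederive that bound first.
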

\begin{proof}
First, by Lemma \ref{lem:integrals rewritten with beta i} we have
\begin{equation}
    I_1 -\frac{t_2^2}{n} = \frac{1}{n}\sum_{i=2}^n (\beta_i + 1/t_2)^{-2} \leq \frac{1}{n}\sum_{i=2}^n \beta_i^{-2} \, .
\end{equation}
Define the spectral measure by
\begin{equation}
    \mu := \frac{1}{n}\sum_{i=1}^n \delta_{\beta_i} \, .
\end{equation}
Recall that $\beta_1 = 0$, and define also 
\begin{equation}
    \nu := \mu - \frac{\delta_{\{0\}}}{n} = \frac{1}{n}\sum_{i=2}^n \delta_{\beta_i} \, , 
\end{equation}
which we emphasise is not a probability measure since its total mass is $1-1/n$.
Let $X \sim \mu$.
Doing the change of variables $r=1/\sqrt{u}$, we have
\begin{equation}
    \frac {1}{n}\sum_{i=2}^{n}\beta_i^{-2}  = \mathbb{E}[X^{-2} \mathbf{1}\{X>0 \} ]=\int_0^{\beta_2^{-2}}\P[u \le  X^{-2} < \infty]\mathrm{d}u =\int_{\beta_2}^{\infty}\frac{2}{r^3}\P[0 <  X \leq r]\mathrm{d}r \, .
\end{equation}
Using that $\mu(\pg 0, r\cd) = \mu(\cg 0, r \cd) - \frac{1}{n} = \nu  (\cg 0, r \cd)$ for $r>0$, we obtain
\begin{equation}
    \int_{\beta_2}^{\infty}\frac{2}{r^3}\P[0 <  X \leq r]\mathrm{d}r =  \int_{\beta_2}^{\infty} \frac{2}{r^3}\mu((0,r])\mathrm{d}r = \int_{\beta_2}^{\infty} \frac{2}{r^3}\nu([0,r])\mathrm{d}r\, .
\end{equation}
Now, by \cite[Theorem 1.7]{LyonsOveisGharan2018} (with their parameter $\alpha$ set to $\alpha = 1/2$), we have for each $r>0$
\begin{equation}
    \nu([0,r]) \le \frac{4}{V(\sqrt{(2d r)^{-1}})} \, .
\end{equation}
Substituting this bound and using the change of variables $s=\sqrt{(2d r)^{-1}}$ yields, recalling that $\beta_2 = 1/\trel$,
\begin{equation}
\begin{split}
     \int_{\beta_2}^{\infty}\frac{2}{r^3} \nu([0,r])\mathrm{d}r \leq \int_{0}^{(2d\beta_2)^{-1/2}} 2(2ds^2)^3 \frac{4}{V(s)}  \frac{2}{ds^3}\mathrm{d}s & = 64 d^{2} \int_{0}^{(2d\beta_2)^{-1/2}}\frac{s^{3} \mathrm{d}s}{V(s)} \\
     & \leq 64 d^{2} \int_{0}^{\sqrt{\trel/d}}\frac{s^{3} \mathrm{d}s}{V(s)} \, .
\end{split}
\end{equation}
We conclude that 
\begin{equation}
     I_1 \leq  \frac{(2\trel)^2}{n} + 64 d^{2} \int_{0}^{\sqrt{\trel/d}}\frac{s^{3} \mathrm{d}s}{V(s)}  \leq 64 \pg  \frac{\trel^2}{n} + d^2\int_{0}^{\sqrt{\trel/d}}\frac{s^{3} \mathrm{d}s}{V(s)}   \pd. \qedhere
\end{equation}
\end{proof}

\subsection{Combined and specialised bounds}
Thanks to our bounds on $I_0$ and $I_1$, we can now prove the general bound in function of the growth of the graph stated in Theorem \ref{thm: bounds QS for vertex transitive graphs intro with integrals and volumes}.

\begin{proof}[Proof of Theorem \ref{thm: bounds QS for vertex transitive graphs intro with integrals and volumes}]
     By \eqref{e:bound when t 2 is replaced by tau}, Lemma \ref{lem:bound A square CS}, (noting that $e^2\leq 8$), and Lemma \ref{lem: rewritting the sum of hitting times squared as a fraction of integrals}, we have
    \begin{equation}
         2\sum_{x\in S} \pi(x) \left(   \mathbb{P}_x[T_A \le 2\trel ]  \right)^2 \leq 16 |A|^2 \frac{I_1}{nI_0^2} = 16 |A|^2 \frac{nI_1}{(nI_0)^2}.
    \end{equation}
We conclude, using Lemmas \ref{lem:lower bound on I 0} and \ref{lem:upper bound on I 1}, that
\begin{equation}
\begin{split}
    16 |A|^2 \frac{1}{(nI_0)^2} nI_1 & \leq (16 |A|^2) \pg \frac{1}{\frac{2}{3} \bbE_\pi[T_o]} \pd^2 64 \pg  \frac{\trel^2}{n} + d^2\int_{0}^{\sqrt{\trel/d}}\frac{s^{3} \mathrm{d}s}{V(s)}   \pd \\
    & = C_0|A|^2 \pg \frac{\trel}{\bbE_\pi[T_o]} \pd^2 \pg 1 + \frac{d^2 n}{\trel^2}\int_{0}^{\sqrt{\trel/d}}\frac{s^{3} \mathrm{d}s}{V(s)}   \pd,
\end{split}
\end{equation}
where $C_0 = 16\cdot (3/2)^2 \cdot 64 = 2304$. 
\end{proof}

\begin{corollary}\label{cor: bounds for vertex transitive graphs with integrals and volumes}
    There exists a universal constant $C_0$ such that the following holds. Let $\Gamma = (V,E)$ be a finite connected vertex-transitive graph. Denote its degree by $d$ and its diameter by $D$. Let $\eset \ne A \subsetneq V$ be a subset of $V$ (whose complement $A^c$ is not necessarily connected in $\Gamma$).  We have 
\begin{equation}
    2\sum_{x\in S} \pi(x) \left(   \mathbb{P}_x[T_A \le 2\trel ]  \right)^2 \leq C_0|A|^2d^2 \pg \frac{D^2}{\bbE_\pi[T_o]} \pd^2 \pg 1 + \frac{n}{D^4}\int_{0}^{D}\frac{s^{3} \mathrm{d}s}{V(s)}   \pd.
\end{equation}
\end{corollary}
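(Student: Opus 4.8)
The plan is to obtain Corollary~\ref{cor: bounds for vertex transitive graphs with integrals and volumes} directly from Theorem~\ref{thm: bounds QS for vertex transitive graphs intro with integrals and volumes} by trading, inside the bound $\Err(\Gamma,A)$, each occurrence of $\trel$ for a constant multiple of $dD^2$. The single extra ingredient is the comparison
\begin{equation*}
\trel \le C_2\, d\, D^2,
\end{equation*}
valid for every finite connected vertex-transitive graph of degree $d$ and diameter $D$, for some universal constant $C_2$. I would recall why this holds: it is the Poincaré (canonical path) inequality. On a $d$-regular graph on $n$ vertices the length-weighted canonical path bound reads, up to an absolute constant, $\trel \le (d/n)\max_e \sum_{\gamma\ni e}|\gamma|$, where $e$ ranges over oriented edges and $\gamma$ over the chosen path system. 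For a Cayley graph $\mathrm{Cay}(G,S)$ one fixes one geodesic word for each $g\in G$ and uses the resulting $n^2$ paths; each has length $\le D$, and any fixed oriented edge $(x,xs)$ is traversed by at most $\sum_{g\in G}(\#\{\text{letters equal to }s\text{ in the word of }g\})\le nD$ of them, which yields $\trel\le dD^2$. The general vertex-transitive case is entirely analogous (working with a Schreier/coset presentation), and in any case this comparison is standard and could be quoted from the literature.

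Granting this, I would split the bound of Theorem~\ref{thm: bounds QS for vertex transitive graphs intro with integrals and volumes} into its two summands and estimate each. The first summand is $C_0|A|^2 \trel^2/(\bbE_\pi[T_o])^2 \le C_0 C_2^2 |A|^2 d^2 D^4/(\bbE_\pi[T_o])^2$, already of the desired form. For the second, $C_0|A|^2 \frac{d^2 n}{(\bbE_\pi[T_o])^2}\int_0^{\sqrt{\trel/d}}\frac{s^3\,\mathrm{d}s}{V(s)}$, the point is that $\sqrt{\trel/d}\le \sqrt{C_2}\,D$, so I only need to absorb an integral over $[D,\sqrt{C_2}\,D]$ into the integral over $[0,D]$. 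Here I would use that, since $D$ is an integer equal to the diameter, $V(s)=n$ for every $s\ge D$; hence $\int_D^{\sqrt{C_2}D}\frac{s^3\,\mathrm{d}s}{V(s)}=\frac1n\int_D^{\sqrt{C_2}D}s^3\,\mathrm{d}s\le \frac{C_2^2 D^4}{4n}$, while the crude bound $V(s)\le n$ on $[0,D]$ gives $\int_0^{D}\frac{s^3\,\mathrm{d}s}{V(s)}\ge \frac{D^4}{4n}$, so $\int_0^{\sqrt{C_2}D}\frac{s^3\,\mathrm{d}s}{V(s)}\le (1+C_2^2)\int_0^D\frac{s^3\,\mathrm{d}s}{V(s)}$. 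Collecting the two summands and factoring out $d^2 D^4/(\bbE_\pi[T_o])^2$ then produces the claimed inequality with the universal constant $C_0(1+C_2^2)$ in place of $C_0$.

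The only genuine obstacle is the first step, namely the comparison $\trel\lesssim dD^2$; everything after it is bookkeeping with nonnegative integrands. I would be slightly careful about two routine points: that $s^3/V(s)\ge 0$ (so enlarging the domain of integration is harmless and $T\mapsto\int_0^T$ is monotone), and that the constant $\sqrt{C_2}$ may exceed $1$, which is exactly why the $V\equiv n$ computation on $[D,\sqrt{C_2}D]$ is needed instead of a bare monotonicity argument.
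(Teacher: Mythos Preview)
Your proposal is correct and follows essentially the same approach as the paper: both deduce the corollary from Theorem~\ref{thm: bounds QS for vertex transitive graphs intro with integrals and volumes} via the comparison $\trel \le C_2\, dD^2$. The paper cites \cite[Theorem~1.7]{LyonsOveisGharan2018} (already invoked in Lemma~\ref{lem:upper bound on I 1}) for $\beta_2 \ge 1/(dD^2)$, i.e.\ $C_2=1$, so $\sqrt{\trel/d}\le D$ and $\trel^2\le d^2D^4$ hold on the nose and the substitution is immediate---your integral comparison on $[D,\sqrt{C_2}\,D]$ becomes unnecessary.
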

\begin{proof}
   By \cite[Theorem 1.7]{LyonsOveisGharan2018} we have $\beta_2 \ge \frac{1}{dD^2}$ (this uses transitivity) i.e.\ $\trel \leq dD^2$. Plugging this into Theorem \ref{thm: bounds QS for vertex transitive graphs intro with integrals and volumes} gives the desired result.
\end{proof}

The growth of finite vertex-transitive graphs was recently understood by Tessera and Tointon \cite[Proposition 6.1]{TesseraTointonIsop}.
In particular they proved the following.

For each integer $q\geq 1$, there exists a constant $c(q)$, such that for every finite (connected) vertex-transitive graph $\Gamma = (V,E)$ with size $|V| = D^qR$ for some $R\in [1, D)$, where $D$ is the diameter of $\Gamma$; for every $1\leq s \leq D$ the size $V(s)$ of the ball of radius $s$ satisfies
\begin{equation}\label{e:tesseratointon volume lower bound first}
    V(s) \ge \begin{cases}c(q) s^{q+1} & s \le R \\
c(q)R s^{q} & s > R  \\
\end{cases} \, ,
\end{equation}\label{e:tesseratointon volume lower bound second}
and, if $q\geq 5$,
\begin{equation}
    V(s)\geq c(5)s^5 \, .
\end{equation}

Using these growth bounds enables specialising Corollary \ref{cor: bounds for vertex transitive graphs with integrals and volumes}, as we stated in Theorem \ref{thm:main improvement AB transitive}. Let us prove this result. Recall from \eqref{e:def of beta Gamma} the definition of $\beta(\Gamma)$.

\begin{proof}[Proof of Theorem \ref{thm:main improvement AB transitive}]
    Set $c^* := \min(\min_{1\leq q \leq 5} c(q), 1)$. First assume that $q\geq 5$. Applying \eqref{e:tesseratointon volume lower bound second} we obtain
\begin{equation}
    \int_{0}^{D}\frac{s^{3} \mathrm{d}s}{V(s)} \leq 1 + \int_{1}^{D}\frac{s^{3} \mathrm{d}s}{c^*s^{5}} = 1 + \frac{1}{c^*}\int_{1}^{D}\frac{\mathrm{d}s}{s^{2}} \leq  \frac{2}{c^*} \, .
\end{equation}    
Moreover, since $n\geq D^4$, we have $1+\frac{n}{D^4} \int_{0}^{D}\frac{s^{3} \mathrm{d}s}{V(s)} \leq \frac{3}{c^*} \frac{n}{D^4}$, and $\mathbb{E}_{\pi}[T_o]=\sum_{i=2}^{n}\frac{1}{\beta_i} \ge \frac12 (n-1) \ge \frac n4$. Therefore,
\begin{equation}
     \pg \frac{D^2}{\bbE_\pi[T_o]} \pd^2 \pg 1 + \frac{n}{D^4}\int_{0}^{D}\frac{s^{3} \mathrm{d}s}{V(s)}   \pd \leq \frac{48/c^*}{n} \, ,
\end{equation}
which completes the proof of the case $q\geq 5$.
    Now assume that $1\leq q \leq 4$.
    Plugging the lower bounds from  \eqref{e:tesseratointon volume lower bound first} into Corollary \ref{cor: bounds for vertex transitive graphs with integrals and volumes}, we obtain
\begin{equation}
    \int_{0}^{D}\frac{s^{3} \mathrm{d}s}{V(s)} \leq 1 + \int_{1}^{R}\frac{s^{3} \mathrm{d}s}{c^*s^{q+1}} + \int_{R}^{D}\frac{s^{3} \mathrm{d}s}{c^*Rs^{q}} \, .
\end{equation}    
We obtain the result for $ q \in \ag 1,2,3,4 \ad$ doing a case by case analysis. 
\end{proof}

\medskip \textbf{Acknowledgements.} We thank Persi Diaconis, Roberto Imbuzeiro Oliveira, Perla Sousi and Matt Tointon for some useful discussions. This work started when the first two authors were at the University of Cambridge, during which time they were supported by EPSRC grant  EP/L018896/1. A first version of this paper was finished while the first author was in residence at the Mathematical Sciences Research Institute in Berkeley, California, during
the Spring 2022 semester on \emph{Analysis and Geometry of Random Spaces}, which was supported by the National Science Foundation under Grant No. DMS-1928930. N.B.  acknowledges the support from the Austrian Science Fund (FWF) grants 10.55776/F1002 on ``Discrete random structures: enumeration and scaling limits" and 10.55776/PAT1878824 on ``Random Conformal Fields'', and previously by FWF grant P33083, ``Scaling limits in random conformal geometry''.
J.H.'s research is supported by an NSERC grant. L. T.'s research was supported by FWF grant P33083, ``Scaling limits in random conformal geometry''.

\bibliography{bibliographyCTU}
\bibliographystyle{alpha}


\end{document}